\newlength{\originalbase}
\newcommand{\spacing}[1]{\setlength{\baselineskip}{#1\originalbase}}
\newif\ifnotesw\noteswtrue
\newtheorem{theorem}{Theorem}[section]
\newtheorem{lemma}[theorem]{Lemma}
\newtheorem{corollary}[theorem]{Corollary}
\begin{document}
\spacing{1.2}
\parskip=+3pt

\def\proofend{\hfill$\Box$\medskip}
\def\Proof{\noindent{\bf Proof. }}
\newcommand{\ProofOf}[1]{\noindent{\bf Proof of {#1}. }}
\newenvironment{proofof}[1]{\ProofOf{#1}}{\hfill $\Box$ \medskip}

\def\proj{\mathrm{proj}}
\def\diam{\mathrm{diam}}

\title{Two-Dimensional Pursuit-Evasion in a Compact Domain with Piecewise Analytic Boundary}
\author{Andrew Beveridge\footnote{Department of Mathematics, Statistics and Computer Science, Macalester College, Saint Paul, MN 55105. \texttt{abeverid@macalester.edu}}~ and Yiqing Cai\footnote{The Institute for Mathematics and Its Applications, University of Minnesota, Minneapolis, MN 55455, \texttt{yiqingcai@ima.umn.edu }}}

\date{}

\maketitle

\newtheorem{definition}{Definition}

\newcommand{\style}[1]{{\emph{#1}}} 

\newcommand{\ie}{{\em i.e.}}
\newcommand{\eg}{{\em e.g.}}
\newcommand{\cf}{{\em cf.}}

\newcommand{\real}{{\mathbb R}}
\newcommand{\comp}{{\mathbb C}}
\newcommand{\nats}{{\mathbb N}}
\newcommand{\rats}{{\mathbb Q}}
\newcommand{\zed}{{\mathbb Z}}
\newcommand{\euc}{{\mathbb E}}
\newcommand{\Domain}{{\mathcal D}} 

\newcommand{\cP}{{\mathcal{P}}}
\newcommand{\cQ}{{\mathcal{Q}}}

\def\cat{\mbox{\sc{cat}}(0)}
\def\catk{\mbox{\sc{cat}}(K)}
\def\totcur{\kappa_{\mathrm{total}}}
\def\c{\tilde{c}}
\def\u{\tilde{u}}
\def\v{\tilde{v}}
\def\x{\tilde{x}}
\def\y{\tilde{y}}
\def\z{\tilde{z}}
\def\e{\tilde{e}}
\def\p{\tilde{p}}

\def\dmin{d_{\min}}

\begin{abstract}
In a pursuit-evasion game, a team of pursuers attempt to capture an evader. The players alternate turns, move with equal speed, and have full information about the state of the game. We consider the most restictive capture condition: a pursuer must become colocated with the evader to win the game. We prove two general results about pursuit-evasion games in topological spaces. First, we show that one pursuer has a winning strategy in any $\cat$ space under this restrictive capture criterion. This complements a result of Alexander, Bishop and Ghrist, who provide a winning strategy for a game with positive capture radius. Second, we consider the game played in a compact domain in Euclidean two-space with piecewise analytic boundary and arbitrary Euler characteristic. We show that three pursuers always have a winning strategy by extending recent work of Bhadauria, Klein, Isler and Suri from polygonal environments to our more general setting.
\end{abstract}

%
%
%
%

\section{Introduction}

A pursuit-evasion game in a domain $\Domain$ is played by a team of pursuers $p_1, p_2, \ldots, p_k$ and an evader $e$. The pursuers win if some $p_i$  becomes colocated with the  evader after a finite number of turns, meaning that the distance $d(p_i,e) = 0$. When this occurs, we say that $p_i$ \style{captures} $e$. 
We consider the discrete time version of the game, which proceeds in turns.  Initially, the pursuers choose their positions $p_1^0, p_2^0, \ldots, p_k^0$, and then the evader chooses his initial position $e^0$. In turn $t \geq 1$, each pursuer $p_i$ moves from her current position $p_i^{t-1}$ to a point  $p_i^{t} \in B(p_i^{t-1}, 1) = \{ x \in \Domain \mid d(p_i^{t-1},x) \leq 1 \}.$ If the evader has been captured, then the game ends with the pursuers victorious. Otherwise, the evader moves from $e^{t-1}$ to a point $e^{t} \in B(e^{t-1}, 1)$.  The evader wins if he remains uncaptured forever. We consider the full-information (full-visibility) game in which each player knows the environment and the locations of all the other players. Furthermore, the pursuers may coordinate their movements.

Turn-based pursuit games in simply connected domains have been well-characterized: one pursuer is sufficient to capture the evader.
Winning pursuer strategies have been found for environments in $\real^n$ \cite{sgall, KR}, and in simply connected  polygons \cite{isler05tro}. Taking a  topological viewpoint and using the weaker capture criterion $d(p,e) < \epsilon$ for some constant $\epsilon >0$,  
Alexander, Bishop and Ghrist \cite{Alexander_pursuit} proved that a single pursuer can capture an evader in any compact $\cat$ by heading directly towards the evader at maximum speed.
We provide an alternate strategy for a compact $\cat$ space that achieves  $d(p,e)=0$ in a finite number of turns.
Our winning pursuer strategy is the topological version of \style{lion's strategy}, which has been used successfully in $\euc^n$ \cite{sgall} and in simple polygons \cite{isler05tro}. We defer the description of this strategy to Section \ref{sec:lion}.

 \begin{theorem}
\label{thm:lion}
A pursuer $p$ using lion's strategy in a compact $\cat$ space $\Domain$, captures the evader $e$ by achieving $d(p,e)=0$ after at most   $\diam(\Domain)^2$ turns.
\end{theorem}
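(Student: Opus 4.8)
The plan is to implement lion's strategy relative to a fixed reference point and to track the pursuer's progress outward from it. Fix $o\in\Domain$ and set $r(x)=d(o,x)$. Throughout I will use the standard consequences of nonpositive curvature: any two points of $\Domain$ are joined by a unique geodesic; these geodesics depend continuously on their endpoints; the metric is convex, so that for constant-speed geodesics $\gamma_0,\gamma_1\colon[0,1]\to\Domain$ the function $s\mapsto d(\gamma_0(s),\gamma_1(s))$ is convex; and geodesic segments are convex sets onto which nearest-point projection is nonexpansive.

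Lion's strategy maintains the invariant that, right after the pursuer's move on turn $t$, the pursuer lies on the geodesic $[o,e^{t-1}]$ joining the reference point to the evader's current position. On turn $t$ the pursuer first transfers from $p^{t-1}\in[o,e^{t-2}]$ onto the current segment $[o,e^{t-1}]$, moving to the point occupying the same fractional position $\sigma$. Since the evader's displacement $d(e^{t-2},e^{t-1})$ is at most one, convexity of the metric bounds the transfer distance by $\sigma\,d(e^{t-2},e^{t-1})\le\sigma\le 1$, so the transfer is always a legal move. The pursuer then spends whatever remains of its unit budget advancing along $[o,e^{t-1}]$ toward $e^{t-1}$, increasing its fractional position on the segment; if that budget carries it all the way to $e^{t-1}$ the evader is captured, the pursuer having become colocated with $e^{t-1}$ before the evader's reply.

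It remains to bound the number of turns. The qualitative picture is that the pursuer steadily works its way outward along the evader's radius: the evader can postpone being overtaken only by fleeing, and a purely radial flight raises both players' radii while leaving the gap $d(p^t,e^{t-1})$ unchanged, whereas any tangential component of the flight is what the pursuer converts into a reduction of the gap. My plan for the quantitative bound is to exhibit a potential $\Phi_t$, nonnegative and bounded above by $\diam(\Domain)^2$, that measures the remaining radial progress and that decreases by at least one on every turn before capture; since $r$ is capped by $\diam(\Domain)$, compactness is exactly what forbids indefinite radial flight and forces $\Phi_t$ down to its terminal value within $\diam(\Domain)^2$ turns. Constructing $\Phi_t$ and verifying its per-turn decrease is the technical heart of the argument.

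I expect this monotonicity estimate to be the main obstacle, and it is where curvature is indispensable. The difficulty concentrates near capture: when the gap is already small the evader can flee almost tangentially, forcing the pursuer to spend most of its step merely re-aligning onto the shifted geodesic, so that neither the gap nor the radius alone changes by a fixed amount — this is the familiar phenomenon by which, in the open Euclidean plane, the evader escapes to infinity and is never caught. The resolution has two ingredients. First, a $\catk$-style comparison-triangle computation at the apex $o$ must show that the re-alignment cost produced by a tangential evader displacement of length at most one is small enough, relative to the radii, that the chosen potential still decreases. Second, compactness must be used to convert the pursuer's persistent outward progress into an actual colocation: once the evader can no longer increase its radius, the discrete unit step lets the pursuer close the final gap exactly, achieving $d(p,e)=0$ rather than merely $d(p,e)<\epsilon$.
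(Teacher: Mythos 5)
There is a genuine gap: the quantitative core of the theorem is announced but never proved. You correctly set up the invariant (pursuer on the geodesic from the reference point to the evader) and correctly observe that convexity of the \cat{} metric makes the same-fraction transfer a legal move, but the statement to be proved is precisely the existence of a potential that changes by a fixed amount each turn, and you defer exactly that: ``constructing $\Phi_t$ and verifying its per-turn decrease is the technical heart of the argument.'' The paper closes this gap with a concrete two-step argument: Sgall's Euclidean lemma, which says that the pursuer who moves to the point of the unit circle about $p$ lying on the new segment $[c,e']$ and closest to $e'$ satisfies $d(c,p')^2 \geq d(c,p)^2 + 1$; and a comparison-triangle transfer of this lemma to \cat{} spaces (the ``no fat triangles'' inequality shows the corresponding move in $\Domain$ has length at most $1$, hence is legal, while the gain $d(c,p')^2 \geq d(c,p)^2+1$ is computed in the comparison triangle). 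Since $d(c,p)^2 \leq \diam(\Domain)^2$, capture follows in $\diam(\Domain)^2$ turns. Without some such explicit per-turn inequality your argument is a restatement of the goal, not a proof.

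Worse, the specific strategy you describe does not support the monotonicity you hope for, at least not with the natural potential $d(o,p)^2$. If the evader moves radially \emph{toward} $o$ (say $o$, $p$, $e$ collinear with $d(o,p)=R$, $d(o,e)=D$, and $e'$ at distance $D-1>R+1$ from $o$ on the same segment), then the ``same fractional position'' on $[o,e']$ lies at distance $R(D-1)/D<R$ from $o$: your transfer step moves the pursuer \emph{backward}, costing $R/D$ of the budget, and the subsequent advance of $1-R/D$ leaves the pursuer at radius $R+1-2R/D$, which for $R/D>1/2$ is strictly less than $R+1$ and can even be less than $R$. So the radius, and hence $d(o,p)^2$, can decrease under your strategy, and the claim that ``a purely radial flight raises both players' radii'' is false when the flight is inward. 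The paper's move (intersect the unit sphere about $p$ with the new geodesic and take the point nearer the evader) avoids this entirely, advancing the pursuer to radius $R+1$ in that configuration. To salvage your version you would need either to switch to the paper's move or to design and verify a more delicate potential; as written, the proposal is not a proof.
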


Theorem \ref{thm:lion} implies that a single pursuer can become colocated with an evader in a simply connected, compact domain $\Domain \subset \euc^2$.
But it is easy to construct compact domains that are evader win:  removing one large open set from the middle of a simply connected domain tips the game in the evader's favor. Indeed, the evader can keep this large obstruction between himself and the pursuer, indefinitely. Such an open set is called an \style{obstacle} or \style{hole} in the environment. It is not hard to show that adding a second pursuer to this two-dimensional domain gives the game back to the pursuers. Adding multiple obstacles creates a distinct topology, and it is natural to wonder how many pursuers are needed to capture an evader in such an  environment. 
The analogous question has been resolved for pursuit-evasion games in certain two-dimensional environments.  Aigner and Fromme \cite{aigner+fromme} proved that three pursuers are sufficient for pursuit-evasion on a planar graph. 
Bhadauria, Klein, Isler and Suri \cite{bhadauria+klein+isler+suri} showed that the analogous result holds  in a two-dimensional polygonal environment with polygonal holes. We generalize this three-pursuer result to  a broader class of topological spaces.

Our pursuit game takes place in a compact and path-connected domain $\Domain \subset \euc^2$.
The set $\Domain$  contains a finite set of 
 disjoint open obstacles $\mathcal{O} = \{ O_1, O_2, \ldots, O_k \}$. 
The boundary of the domain is $\partial\Domain = \{  \partial O_0,  \partial O_1, \partial O_2, \ldots, \partial O_k \}$ where we define $\partial O_0$ to be the outer boundary of $\Domain$, for convenience.
We place two conditions on the boundary. First, 
$\partial \Domain$ can be decomposed into a finite number of analytic curves $\gamma(t) = (x(t), y(t))$ for $0 \leq t \leq 1$, where each of $x(t),y(t)$ can locally be expanded as convergent power series.   Second,  we require that
$\partial D$ is a 1-manifold: for any $x \in \partial D$, there exists an $\epsilon>0$, such that $B(x, \epsilon) \cap \partial D$ is homeomorphic to $\real^1$. In other words, we  forbid  self-intersections. For brevity, we say that a domain $\Domain$ satisfying these properties is  \style{piecewise analytic}. 
We list three consequences of these conditions. First,  the number of singular points on the boundary is finite. Second, the absolute value of the curvature at the nonsingular points of $\partial D$ is bounded above by some constant $\kappa_{\max} >0$.
Third, there is a minimum separation $\dmin > 0$ between boundary components: $d(O_i, O_j) > \dmin$ for all $0 \leq i < j \leq k$. 
During the game, the pursuers will guard a sequence of geodesics; crucially, we will see in Section \ref{sec:proof} that  each of these geodesics is also piecewise analytic.
This brings us to our main result.

\begin{theorem}
\label{thm:3pursuers}
Three pursuers can capture an evader in  a compact domain in $\euc^2$ with piecewise analytic boundary. The number of turns required to capture the evader for a domain with $k$ obstacles is
$O( 2k \cdot \diam(\Domain) + \diam(\Domain)^2)$.
\end{theorem}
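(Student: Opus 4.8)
The plan is to reduce the game to the simply connected case handled by Theorem~\ref{thm:lion}. Since a simply connected compact planar domain is $\cat$ under its intrinsic metric (obstacle boundaries present convex obstructions around which geodesics bend, contributing nonpositive curvature, and reflex boundary corners do the same), a single pursuer captures the evader there by lion's strategy. The role of the three pursuers is therefore to progressively confine the evader to such a region. The workhorse is a \emph{guarding lemma}: a single pursuer can \emph{guard} a piecewise analytic geodesic $\pi$, meaning that after a setup phase the evader can never touch a point of $\pi$ without being captured. I would prove this by having the pursuer track the \emph{shadow} of the evader, the nearest-point projection $\proj_\pi(e)$ of $e$ onto $\pi$. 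The crucial fact is that projection onto a geodesic is nonexpansive: a geodesic is a convex set, so in the relevant $\cat$ geometry the map $e \mapsto \proj_\pi(e)$ is $1$-Lipschitz, and a pursuer moving at unit speed can keep its position equal to $\proj_\pi(e)$ turn by turn. Once synchronized, any evader path reaching $\pi$ must pass through $\proj_\pi(e)$, where the pursuer already sits.

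First I would verify the structural claim flagged before the theorem: every geodesic that arises is piecewise analytic. A shortest path in $\Domain$ alternates between straight segments through the interior and maximal arcs running along $\partial\Domain$; the latter are analytic by hypothesis, and the bound $\kappa_{\max}$ on the curvature together with the minimum separation $\dmin$ between boundary components guarantees that each geodesic decomposes into finitely many analytic pieces. This is what lets the projection argument of the $\cat$ setting apply to curved geodesics, and it is where the piecewise analytic hypotheses are used.

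With the guarding lemma in hand, I would run a sweeping argument adapted from Bhadauria, Klein, Isler and Suri. The evader is confined to a \emph{contaminated region} bounded by a constant number of guarded geodesics. At each phase, two pursuers hold the geodesics that bound this region while the free third pursuer walks to install a new guarded geodesic that cuts one obstacle away from the contaminated region, reducing the number of holes accessible to the evader by one; establishing a new cut and releasing a previously committed pursuer costs $O(\diam(\Domain))$ turns. Iterating over the $k$ obstacles, with a constant number of geodesic installations per obstacle accounting for the factor $2k$, reduces the contaminated region to a simply connected set in $O(2k\cdot\diam(\Domain))$ turns. A single pursuer then finishes with lion's strategy in the additional $O(\diam(\Domain)^2)$ turns guaranteed by Theorem~\ref{thm:lion}, yielding the stated bound.

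The main obstacle is the guarding lemma in the curved setting. In a polygonal domain the shadow $\proj_\pi(e)$ moves along a polyline and one tracks it segment by segment; here the pursuer must follow a projection that can slide along analytic boundary arcs, so I must show the shadow remains continuous and $1$-Lipschitz as its foot passes between the straight and curved pieces of $\pi$, and that it is well defined, i.e.\ the projection is unique, throughout the evader's reachable set. Handling the transitions at the finitely many singular points of the geodesic, and confirming that the cut-and-release bookkeeping always leaves the contaminated region simply connected at the end so that Theorem~\ref{thm:lion} applies, are the places where the extension beyond the polygonal case demands genuine care.
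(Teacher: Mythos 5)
Your high-level architecture matches the paper's: a guarding lemma, a sweep that peels obstacles off the evader territory one at a time using three pursuers in rotation, and lion's strategy (Theorem~\ref{thm:lion}, applicable because a simply connected compact planar domain with its intrinsic metric is $\cat$) to finish. But there are two genuine gaps.

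First, your guarding lemma is built on the wrong projection. You guard $\Pi$ by tracking the nearest-point projection $\proj_\Pi(e)$ and justify its $1$-Lipschitz property by convexity of geodesics ``in the relevant $\cat$ geometry.'' The region in which guarding must work is precisely the evader territory that still contains obstacles, and that region is \emph{not} $\cat$: it is multiply connected, the nearest point on $\Pi$ need not be unique (the evader can be equidistant from two points of $\Pi$ via paths on either side of an obstacle), and the map can jump discontinuously as the evader circles a hole, so no unit-speed pursuer can track it. Even where it is single-valued, sitting at the nearest point does not by itself yield capture: if the evader steps onto some $y\in\Pi$, $y$ need not be its previous nearest point, and the triangle inequality only gives $d(p,y)\le 2$. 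The paper instead uses the \emph{anchored} projection, sending $z$ to the point of $\Pi$ at arc length $d(u,z)$ from the endpoint $u$; minimality of $\Pi$ makes this map well defined, $1$-Lipschitz (Lemma~\ref{lemma:project}), and it gives exactly the inequality $d(p^t,e^t)\le d_\Pi(\pi(e^{t-1}),y)+d(y,e^t)\le d(e^{t-1},e^t)=1$ needed for capture. Your proposal flags uniqueness as a worry to be checked, but the failure is not a technicality to be patched: the nearest-point shadow is the wrong object, and you need to switch to the distance-from-anchor parametrization.

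Second, the proposal asserts that the free pursuer can always ``install a new guarded geodesic that cuts one obstacle away,'' but gives no mechanism for finding that geodesic or for why it is guardable, and this is where most of the paper's work lies. Two issues must be resolved: (i) which path to guard next --- the paper proves (Lemma~\ref{lem: multiple shortest paths}, via a continuity argument along the boundary arc, combined with a Gauss--Bonnet argument, Lemma~\ref{lem: touching lemma}, showing that non-unique shortest paths must touch obstacles) that a suitable cut exists; and (ii) the next path is generally \emph{not} a shortest path in the territory, only a ``second shortest path'' in a new homotopy class, which is guardable only \emph{conditionally} on the first path remaining guarded; constructing it requires excising a carefully chosen triangular neighborhood of the touching point (Lemma~\ref{lem:next shortest path}), using $\kappa_{\max}$ and $\dmin$. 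Relatedly, your bookkeeping (``reduce the number of holes by one per phase'') is too optimistic: some rounds only convert a dangerous obstacle to a safe one or reshape the territory without removing a hole, which is why the paper tracks three threat levels per obstacle and needs the $2k$ rounds rather than $k$. Without these ingredients the induction does not close.
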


At a high level,  our winning three-pursuer strategy  builds directly on those found in \cite{aigner+fromme, bhadauria+klein+isler+suri}, and we are indebted to those previous papers. However, our geometric and topological approach is entirely new. In particular, our arguments are grounded in a careful investigation of the  convexity, curvature and homotopy classes of geodesic curves in our domain.    
And of course, Theorem \ref{thm:3pursuers} significantly extends the class of known three-pursuer-win domains.

\subsection{Related Work}

Pursuit-evasion games have enjoyed a long  research history. In the 1930s, Rado posed the Lion and Man game in which a lion hunts the man in a circular arena. The players move with equal speeds, and the lion wins if it achieves colocation. At first blush, it seems that lion should be able to capture man, regardless of the man's evasive strategy. However, Besicovitch showed that when the game is played in continuous time, the man can follow a spiraling path so that lion can get arbitrarily close, but cannot achieve colocation \cite{littlewood}. However, when lion and man move in discrete time steps, our intuition prevails: lion does have a winning strategy \cite{sgall}.

The interdisciplinary literature on pursuit-evasion games spans a range of settings and variations. Pursuit games have been studied in many environments, including  graphs, in polygonal environments and in topological spaces. Researchers have considered motion constraints such as  speed differentials between the players, constraints  on acceleration, and energy budgets. As for sensing models, the players may have full information about the positions of other players, or they may have incomplete or imperfect information. Typically, the capture condition requires achieving colocation,  a proximity  threshold, or sensory visibility (such as a non-obstructed view of the evader).
For an overview of pursuit-evasion on graphs, see the monograph \cite{bonato+nowakowski}. The papers \cite{chi} and \cite{KR} provide a nice introduction to pursuit in the polygonal setting.


The past decade has witnessed  a renaissance of pursuit-evasion results  in multiple disciplines. 
Prominent research efforts come from the robotics community, where pursuit-evasion in polygonal environments is a productive setting for exploring autonomous agents. Pursuit-evasion has also thrived in the graph theory community, where it is known as the game of Cops and Robbers. More recently, researchers have started  exploring pursuit-evasion games in topological spaces. This is a natural evolution for the study of pursuit-evasion games.  Indeed,  determining the number of pursuers required to capture an evader in a given environment becomes a question about its topology since the various loops and holes of the environment provide escape routes for the evader. 

The classic paper of Aigner and Fromme \cite{aigner+fromme} initiated the study of multiple pursuers versus a single evader on a graph. In this turn-based game, agents can move to adjacent vertices, and the cops win if one of them becomes co-located with the robber.  This paper introduced the \style{cop number} of a graph, which is the minimum number of pursuers (cops) needed to catch the evader (robber). Aigner and Fromme proved that the cop number of any planar graph is at most 3. This bound is tight, as the dodecahedron graph requires three cops. At a high level, their winning pursuer strategy proceeds as follows. Two cops guard distinct $(u,v)$-paths where $u,v$ are vertices of the graph $G$. This restricts the pursuer movement to a subgraph of $G$. The third pursuer then guards another $(u,v)$-path, chosen so that (1) the robber's movement is further restricted, and (2) one of the other cops no longer needs to guard its path. This frees up that cop to continue the pursuit. This process repeats until the 
evader is caught.

More recently, an analogous result was proven by Bhadauria, Klein, Isler and Suri \cite{bhadauria+klein+isler+suri} for pursuit-evasion games in a two-dimensional polygonal environment with polygonal holes. 
In this turn-based game, an agent can move to any point within unit distance of its current location. Like Aigner and Fromme, they use colocation as their capture criterion. Bhadauria et al.~prove that three pursuers are  sufficient for pursuit-evasion in this setting, and that this bound is tight. The pursuer strategy is inspired by the Aigner and Fromme strategy for planar graphs: two pursuers guard paths that confine the evader while the third pursuer takes control of another path that further restricts the evader's movement. Of course, the details of the pursuit and the technical proofs are quite different from the graph case. Their proofs make heavy use of the polygonal nature of the environment, both to find the paths to guard and to guarantee that their pursuit finishes in finite time.

Just as the proofs of Bhadauria et al.~were inspired by Aigner and Fromme, our proof of Theorem \ref{thm:3pursuers} is inspired by those for the polygonal environment. Bhadauria et al.~actually give two different winning strategies for three pursuers. At a high level, these strategies progress in the same way, but the tactics for choosing paths and how to guard them are different. Herein, we adapt their \style{shortest path strategy} to our more general setting. The topological environment introduces a distinctive set of  challenges to overcome. 
In particular, we do not have a finite set of polygonal vertices to use as a backbone for our guarded  paths. Instead,  we rely on homotopy classes to differentiate between paths to guard.  Looking beyond the high-level structure of our pursuer strategy, the arguments (and their technical details) in this paper are wholly distinct from those found in \cite{bhadauria+klein+isler+suri}, and our result applies to a much broader class of environments.  

Finally, we note that our result follows in the footsteps of other recent explorations of pursuit-evasion games in  general geometric and topological domains. Pursuit-evasion games in such spaces have applications in robotics, where agents must navigate and coordinate in  high dimensional   configuration spaces. Alexander, Bishop and Ghrist helped to pioneer this subject, studying pursuit-evasion games with the capture condition  $d(p,e) < \epsilon$ for some constant $\epsilon > 0$ (rather that colocation). 
In \cite{Alexander_pursuit}, Alexander, Bishop and Ghrist prove that a single pursuer can capture an evader in any compact $\cat$ space: the \style{simple pursuit} strategy of heading directly towards the evader is a winning strategy. 
In \cite{Alexander_curvature}, these  authors explore the simple pursuit strategy in unbounded $\catk$ spaces with positive curvature $K >0$, developing connections between evader-win environments and the total curvature of the pursuer's trajectory.
Finally, in \cite{Alexander_unbounded},
they consider pursuit games in unbounded Euclidean domains using multiple pursuers. They provide conditions on the initial configuration of the players that guarantee capture, generalizing (and amending) results of Sgall \cite{sgall} and Kopparty and Ravishankar \cite{KR}.


\subsection{Preliminaries}

We introduce some  notation and review some concepts and results from algebraic topology \cite{hatcher}. We then prove three lemmas about convex paths in two-dimensional compact regions with piecewise analytic boundary.

 A topological space is a set $X$ along with a collection of subsets of $X$, called \style{open sets} that satisfy a sequence of axioms \cite{armstrong}.  A map $f: X \rightarrow Y$ between two spaces is \style{continuous}  when the inverse image of every open set in $Y$ is open in $X$.  
A \style{path} $\Pi: [0,1] \rightarrow \Domain$ is a continuous map  from interval $[0,1]$ to $\Domain$,
with initial point $\Pi(0)$ and terminal point $\Pi(1)$.   The \style{length} $l(\Pi)$ of this path is its arc length in Euclidean space $\euc^2$.  A path $\Pi$ is a \style{loop} when $\Pi(0)=\Pi(1)$. A \style{simple path} has no  self-intersections, meaning that $\Pi$ is injective. By abusing notation, we write $x \in \Pi$ when $x = \Pi(t)$ for some $t \in [0,1]$. For $x,y \in \Pi$, we use $\Pi(x,y)$ to denote the subpath connecting these points. The space $X$ is \style{path-connected} if there exists a path between any pair of points $x,y \in X$.

A \style{homotopy} of paths is a family of maps $f_t: [0,1] \rightarrow X,\ t\in [0,1]$, such that the associated map $F:  [0,1]\times  [0,1] \rightarrow X$ given by $F(s,t)=f_t(s)$ is continuous, and the endpoints $f_t(0)=x_0$ and $f_t(1)=x_1$ are independent of $t$. The paths $f_0$ and $f_1$ are called \style{homotopic}.
The relation of homotopy on paths with fixed endpoints is an equivalence relation and we use $[f]$ to denote the  homotopy class of the curve $f$ under this relation. 
The set $[f]$ of loops in $X$ at the basepoint $x_0$ forms a group under path composition, called the \style{fundamental group} of $X$ at the basepoint $x_0$.
The space $X$ is \style{simply connected} when it is path-connected and its fundamental group is trivial. For example, a subspace $X$ of $\euc^2$ is simply connected if and only if it has the same homotopy type as a 2-disc.

We now turn to some geometric properties of paths in $\euc^2.$ The distance $d(x,y)$ between points $x,y \in X$ is the length of a shortest $(x,y)$-path in $X$. When restricting ourselves to $R \subset X$, we use $d_R(x,y)$ to denote the distance between these points in the subdomain. We will frequently consider a subdomain $R$ enclosed by two simple $(u,v)$-paths $\Pi_1, \Pi_2$. We denote such a set as $R[\Pi_1, \Pi_2] \subset X$.

 For a $C^2$ path $\gamma : [0,T] \rightarrow \euc^2$,
 its curvature at $\gamma(t)$ is defined as $\kappa(t) = \pm\frac{||\gamma'(t)\times\gamma''(t)||}{||\gamma'(t)||^3}$, with the sign positive if the tangent turns counterclockwise, and negative if the tangent turns clockwise. 
The smoothness of a piecewise analytic curve $\gamma: [0,1] \rightarrow \euc^2$ ensures that  its absolute curvature is bounded at its nonsingular points.  If $\gamma$ is piecewise $C^2$ and continuous, with $t_0 < t_1 < \cdots < t_n$ as the preimages of the singular points, then its \style{total curvature}  is  
$$\totcur(\gamma) = \sum_{i = 1}^{n}\int_{t_{i-1}}^{t_{1}}\kappa(t)dt + \sum_{i = 1}^{n} \theta_i$$ 
where $\theta_i$ is the exterior angle at $\gamma(t_i)$, and $\theta_n=0$ when $\gamma(t_0) \neq \gamma(t_n)$.
This brings us to the celebrated Gauss-Bonnet Theorem which relates the total curvature of a closed curve with the Euler characteristic of its enclosed region. In our setting,  the Euler characteristic equals $1-k$, where  $k$ is the number of obstacles in the region $R$.

\begin{theorem}
\label{thm: gauss bonnet}
[Gauss-Bonnet Theorem, cf.~\cite{docarmo}] Given a compact region $R \subset \euc^2$ with boundary $\partial R$, we have 
$$
\totcur(\partial R)= 2\pi\chi(R),
$$ 
where $\chi(R)$ is the Euler characteristic of $R$. 
\end{theorem}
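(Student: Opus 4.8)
The plan is to recognize this statement as the flat, planar specialization of the general Gauss-Bonnet theorem, and to deduce the stated identity from it while also indicating a direct self-contained argument. Recall the general Gauss-Bonnet theorem for a compact region $R$ with piecewise smooth boundary on a surface, in the form proved in \cite{docarmo}: $\int_R K\,dA + \int_{\partial R}\kappa_g\,ds + \sum_i \theta_i = 2\pi\chi(R)$, where $K$ is the Gaussian curvature, $\kappa_g$ the geodesic curvature of the boundary arcs, and $\theta_i$ the exterior angles at the corners. The task then reduces to specializing each of the three terms to our setting $R \subset \euc^2$.

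First I would dispatch the interior term. Since $R$ lies in the Euclidean plane, which is flat, the Gaussian curvature vanishes identically, so $\int_R K\,dA = 0$. Second, for a curve in $\euc^2$ the geodesic curvature coincides with the ordinary signed curvature $\kappa(t)$ defined earlier in this section; hence $\int_{\partial R}\kappa_g\,ds = \sum_i \int_{t_{i-1}}^{t_i}\kappa(t)\,dt$, which is exactly the smooth part of $\totcur(\partial R)$. The corner term $\sum_i \theta_i$ is already built into our definition of total curvature. Combining these three observations, the general formula collapses precisely to $\totcur(\partial R) = 2\pi\chi(R)$.

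To keep the argument self-contained and to make the appearance of $\chi(R) = 1-k$ transparent, I would alternatively argue component-by-component using the Theorem of Turning Tangents (Hopf's Umlaufsatz). Orient $\partial R$ so that $R$ always lies to the left: then the outer boundary $\partial O_0$ is traversed counterclockwise and contributes total turning $+2\pi$, while each of the $k$ hole boundaries $\partial O_1,\ldots,\partial O_k$ is traversed clockwise and contributes $-2\pi$. Summing over all components gives $\totcur(\partial R) = 2\pi - 2\pi k = 2\pi(1-k)$, and since a disk with $k$ holes deformation retracts onto a wedge of $k$ circles we have $\chi(R) = 1-k$, which yields the claim.

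The main obstacle is the non-smooth boundary: because $\partial\Domain$ is only piecewise analytic, the turning-tangents theorem must be invoked in the form that incorporates the finitely many exterior angles $\theta_i$, and one must fix the sign conventions so that convex and reflex corners are treated consistently with the chosen orientation (with $R$ on the left, as required for the signed identity to hold when $\chi(R)$ is negative). The piecewise analyticity is exactly what guarantees finitely many singular points, so that $\sum_i \theta_i$ is a finite sum and $\totcur(\partial R)$ is well defined. Beyond this orientation bookkeeping and the standard proof of the Umlaufsatz for a single simple closed piecewise-$C^2$ curve, the multiply-connected case follows immediately from the per-component summation above.
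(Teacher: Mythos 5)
Your proposal is correct. Note that the paper itself offers no proof of this statement: it is quoted as a known result with a ``cf.~do Carmo'' citation, so the only implicit argument in the paper is exactly your first paragraph, namely specializing the general surface Gauss--Bonnet formula $\int_R K\,dA + \int_{\partial R}\kappa_g\,ds + \sum_i\theta_i = 2\pi\chi(R)$ to the flat plane, where $K\equiv 0$ and $\kappa_g$ reduces to the signed planar curvature appearing in the paper's definition of $\totcur$. Your second, self-contained route via Hopf's Umlaufsatz is a genuine addition: summing the turning number over the $k+1$ boundary components (outer component counterclockwise contributing $+2\pi$, each hole boundary clockwise contributing $-2\pi$) gives $2\pi(1-k)$ directly, and it simultaneously justifies the identity $\chi(R)=1-k$ that the paper asserts without derivation just before the theorem statement. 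You are also right to flag the two points that actually need care in this setting: the boundary must be oriented with $R$ on the left for the signs of the curvature integrals and exterior angles to match the paper's convention (positive for counterclockwise turning), and the piecewise analyticity is what guarantees finitely many corners so that $\totcur(\partial R)$ is well defined; the corner version of the Umlaufsatz handles the rest. Either route is acceptable here, and your elementary one arguably fits the paper's planar setting more tightly than the full surface theorem.
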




We  use the Gauss-Bonnet Theorem to understand the effect of obstacles on  shortest paths. In particular, we will consider pairs of paths $\Pi_1, \Pi_2$ with shared endpoints $u,v$. These paths will be piecewise analytic (so that they have a finite number of singular points). Our goal is to prove that if the shortest $(u,v)$-path is not unique, then each shortest path must touch an obstacle in the given region. We begin with a definition of convexity, which we define for the broader family of piecewise $C^2$ smooth curves; an example is shown in Figure \ref{fig: convexity}(a).

\begin{definition}
 \label{def: convex boundary}
Let $\Pi:[0,1]\rightarrow \euc^2$ be a piecewise $C^2$ smooth  curve in $\euc^2$.  Then $\Pi$ is \style{convex} when the following holds for  any point $x\in \Pi\backslash\{u,v\}$: \\
(a) If $\Pi$ is $C^2$ smooth at $x$, then  the curvature at $x$ is nonpositive;\\
(b) If $\Pi$ is not $C^2$ smooth at $x$, then the tangent line at $x$ turns clockwise by an angle $0 \leq \theta  \leq \pi$.  
\end{definition}

 \begin{figure}
\begin{center}
\begin{tabular}{ccc}

\begin{tikzpicture}


\draw[domain=100:340,smooth,variable=\x, dashed] plot ({\x}:.5);

\draw[domain=30:120,smooth,variable=\x, dashed] plot ({\x}:.4 + \x/90);

\draw[dashed] (-20:.5) -- (30:.75);


\draw plot [smooth] coordinates  {(100:.5)  (120:.75)  (150:1) (180:1.25)  (230: 1.5) (310: 1.25) (0: 1.25) (45: 1.75) (60: 2.25) (75:2.5) (90: 2.5) (120:1.767)};

\node at (90:1.1) {\small $\Pi$};

\draw[fill] (100:0.5) circle (2pt);
\draw[fill] (120:1.767) circle (2pt);

\node[left] at (120:1.767) {$u$};
\node[below right] at (100:0.5) {$v$};

\end{tikzpicture}

&
\qquad \qquad \qquad
&

\begin{tikzpicture}

\draw (90:2) to [bend right] (180:1);
\draw (330:1.5) to [bend left] (180:1);

\draw (90:2) to [bend left] (20:.9);
\draw (330:1.5) to [bend left] (20:.9);

\draw[dashed] (130:1.32) -- (260:.68);

\draw[fill] (180:1) circle (2pt);
\draw[fill] (130:1.32) circle (2pt);
\draw[fill] (260:.68) circle (2pt);

\draw[fill] (90:2) circle (2pt);
\draw[fill] (330:1.5) circle (2pt);

\node[left] at (180:1) {$x$};
\node[left] at (130:1.32) {$y_1$};
\node[below] at (260:.75) {$y_2$};

\node[above] at (90:2) {$u$};
\node[below] at (330:1.5) {$v$};

\node at (110:2) {$\Pi_1$};
\node at (20:1.3) {$\Pi_2$};
\node at (-.2, .3) {$\Lambda$};

\end{tikzpicture}

\\
(a) && (b)
\end{tabular}
 \end{center}
  \caption{(a) A piecewise convex $(u,v)$-path $\Pi$. (b) Shortcutting a non-convex $(u,v)$-path $\Pi_1$.}
\label{fig: convexity}
 \end{figure}
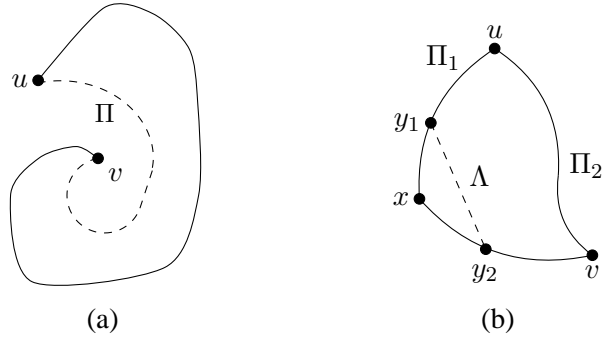





\begin{lemma}
 \label{lem: non touching shortest path in its homotopy is convex}
Let $\Pi_1$ and $\Pi_2$ be two piecewise analytic $(u,v)$-paths with $\Pi_1 \cap \Pi_2 = \{ u, v\}$.   If $\Pi_1$ is a shortest path in $R[\Pi_1,\Pi_2]$, and touches no obstacle inside $R[\Pi_1,\Pi_2]$, then $\Pi_1$ is convex in $R[\Pi_1,\Pi_2]$.
\end{lemma}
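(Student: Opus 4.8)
The plan is to argue by contradiction using a shortcutting argument, exactly as illustrated in Figure \ref{fig: convexity}(b). Suppose $\Pi_1$ is not convex in $R[\Pi_1,\Pi_2]$. Then there is an interior point $x \in \Pi_1 \setminus \{u,v\}$ witnessing a failure of Definition \ref{def: convex boundary}: either (a) $\Pi_1$ is $C^2$-smooth at $x$ with strictly positive curvature, or (b) $\Pi_1$ has a corner at $x$ whose tangent does not turn clockwise by an angle in $[0,\pi]$ (that is, it turns counterclockwise, or clockwise by more than $\pi$). The sign conventions in Definition \ref{def: convex boundary} are arranged precisely so that in each of these cases the curve bulges locally into the interior of $R[\Pi_1,\Pi_2]$ rather than away from it. My goal is to produce two points $y_1, y_2 \in \Pi_1$ flanking $x$ so that the straight segment $\overline{y_1 y_2}$ lies inside $R[\Pi_1,\Pi_2]$, avoids every obstacle, and is strictly shorter than the subpath $\Pi_1(y_1,y_2)$. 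Replacing $\Pi_1(y_1,y_2)$ by this chord then yields a strictly shorter $(u,v)$-path lying in $R[\Pi_1,\Pi_2]$, contradicting the assumption that $\Pi_1$ is a shortest path there.

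Next I would pin down the local geometry that guarantees strict shortening. In the smooth case (a), piecewise analyticity makes $\Pi_1$ genuinely $C^2$ near $x$, so by continuity the curvature stays strictly positive on a neighborhood; hence $\Pi_1$ is strictly convex there and, for $y_1,y_2$ sufficiently close to $x$, the chord $\overline{y_1 y_2}$ lies strictly on the interior ($R$) side of $\Pi_1$ and is strictly shorter than the curved subpath. In the corner case (b), the two analytic arcs meeting at $x$ have tangents that open toward the interior of $R$, so a short transversal chord cutting across the corner enters $R$; again it is strictly shorter, since a straight segment is the unique shortest connection between its endpoints and $\Pi_1(y_1,y_2)$ is not straight. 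In both cases the strict inequality $l(\overline{y_1 y_2}) < l(\Pi_1(y_1,y_2))$ is the clean part of the argument.

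The step I expect to be the main obstacle is verifying that the shortcut actually remains inside $R[\Pi_1,\Pi_2]$ and clear of all obstacles, since convexity is asserted relative to this specific region. Here I would invoke the hypothesis that $\Pi_1$ touches no obstacle inside $R[\Pi_1,\Pi_2]$: because $\Pi_1$ is compact and disjoint from the (compact) obstacles, there is a positive distance $\delta$ from $\Pi_1$ to $\bigcup_i O_i$, and the minimum separation $\dmin$ reinforces this bound. Moreover, away from $u$ and $v$ the curve $\Pi_1$ is bounded away from $\Pi_2$ (since $\Pi_1 \cap \Pi_2 = \{u,v\}$). Choosing $y_1,y_2$ within distance less than $\min(\delta,\dmin)/2$ of $x$ therefore confines the chord to a small disk about $x$ that meets no obstacle and does not reach $\Pi_2$; combined with the local analysis above, which places the chord on the interior side of $\Pi_1$, this forces $\overline{y_1 y_2} \subset R[\Pi_1,\Pi_2]$. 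With the shortcut legal and strictly shorter, the modified $(u,v)$-path contradicts minimality, completing the proof.
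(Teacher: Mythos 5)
Your proof is correct and follows essentially the same route as the paper's: a contradiction via a local shortcutting chord $\overline{y_1y_2}$ near the non-convex point $x$, split into the same smooth-positive-curvature and non-convex-corner cases, with the chord kept inside $R[\Pi_1,\Pi_2]$ and away from obstacles by a positive separation bound (your $\delta$ plays the role of the paper's $d_0$). Your added justification that the chord stays clear of $\Pi_2$ and the obstacles is a slightly more explicit version of what the paper states in one line, not a different argument.
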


\begin{proof}
 We prove the lemma by contradiction. 
Suppose that there exists an $x\in\Pi \backslash \{ u,v \}$ where the convexity of $\Pi_1$ in $R[\Pi_1, \Pi_2]$ is violated. Either (a) $\Pi_1$ is  $C^2$ smooth at $x$, but the curvature at $x$ is positive, or (b) $\Pi_1$ is not $C^2$ smooth at $x$, and the tangent line turns counterclockwise by an angle $0 < \theta < \pi$ at $x$, creating a non-convex corner. 
Let $d_0$ denote the minimum distance between $\Pi_1$ and any obstacle $O \in R[\Pi_1,\Pi_2]$ with $\Pi_1 \cap \partial O = \emptyset$.

Suppose that the curvature at $x$ is positive, see Figure \ref{fig: convexity}(b). There must be a $C^2$ subpath 
 $\Pi_x$ between $y_1$ and $y_2$ of $\Pi_1$ around $x$ with positive curvature. Using the lower bound $d_0$ on the separation between $\Pi_1$ and any obstacles inside $R[\Pi_1,\Pi_2]$ and $\Pi_2$, we may take $y_1, y_2$ to be close enough so that the line segment $\Lambda$ connecting $y_1$ and $y_2$ lies inside $R[\Pi_1,\Pi_2]$ and does not encounter any obstacles.   
Replacing $\Pi_x$ with $\Lambda$ creates a path that is strictly shorter than $\Pi_1$,  contradicting the minimality of $\Pi_1$.

Next suppose  there is a non-convex corner at $x$. By an analogous argument to the previous case, we can create a short-cut $\Lambda$ around $x$ to make a shorter path than $\Pi_1$, a contradiction.
\end{proof}

\begin{lemma}
\label{lem: multiple convex lemma}
Let $\Pi_1, \Pi_2$ be two  $(u,v)$-paths with $\Pi_1 \cap \Pi_2 = \{ u, v \}$. Suppose that $\Pi_1$ is a convex and piecewise analytic $(u,v)$-path in  $R[\Pi_1,\Pi_2]$, and  let $\Pi_2$ be a convex and piecewise analytic  $(v,u)$-path in $R[\Pi_1,\Pi_2]$. Then $\Pi_1$ and $\Pi_2$ are both straight lines connecting $u, v$. 
\end{lemma}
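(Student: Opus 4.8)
The plan is to apply the Gauss-Bonnet Theorem (Theorem \ref{thm: gauss bonnet}) to the region $R = R[\Pi_1,\Pi_2]$ and read off strong constraints from the two convexity hypotheses. Since $\Pi_1$ and $\Pi_2$ are simple and meet only at $u$ and $v$, their union $\partial R = \Pi_1 \cup \Pi_2$ is a simple closed (Jordan) curve, so I take $R$ to be the topological disk it bounds, giving $\chi(R) = 1$ and hence $\totcur(\partial R) = 2\pi$. I would orient $\partial R$ by traversing $\Pi_1$ from $u$ to $v$ and then $\Pi_2$ from $v$ to $u$. The shortcut analysis underlying Lemma \ref{lem: non touching shortest path in its homotopy is convex} (where the chord replacing a positively curved arc falls \emph{inside} $R$) shows that $R$ lies to the left of $\Pi_1$ as it is traversed from $u$ to $v$, and likewise to the left of $\Pi_2$ traversed from $v$ to $u$. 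Thus this traversal is positively (counterclockwise) oriented, which is exactly the orientation for which Gauss-Bonnet yields $+2\pi$.

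Next I would split the total curvature into three pieces: the integral of the signed curvature over the smooth analytic arcs of $\Pi_1$ and $\Pi_2$, the exterior angles at the interior (non-endpoint) corners of $\Pi_1$ and $\Pi_2$, and the two exterior angles $\theta_u, \theta_v$ at the shared endpoints. Because $R$ lies on the left throughout, the convexity of $\Pi_1$ as a $(u,v)$-path and of $\Pi_2$ as a $(v,u)$-path, in the sense of Definition \ref{def: convex boundary}, says precisely that every smooth point contributes nonpositive curvature and every interior corner turns clockwise, i.e.\ contributes a nonpositive exterior angle. Hence the first two pieces together are at most $0$. On the other hand, at the simple-curve corners $u$ and $v$ the interior angle of $R$ lies in $(0,2\pi)$, so each exterior angle satisfies $\theta_u,\theta_v \le \pi$, whence $\theta_u + \theta_v \le 2\pi$.

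Combining these observations with Gauss-Bonnet forces every inequality to be tight:
$$2\pi = \totcur(\partial R) \le \theta_u + \theta_v \le 2\pi.$$
It follows that the smooth-curvature integral vanishes, the interior-corner angles sum to $0$, and $\theta_u = \theta_v = \pi$. Since the signed curvature is continuous and nonpositive on each analytic piece, a vanishing integral forces $\kappa \equiv 0$; since each interior exterior angle is nonpositive and they sum to $0$, each is $0$, so neither path has a genuine corner. A piecewise analytic curve with identically zero curvature and no corners is a straight line segment, so $\Pi_1$ and $\Pi_2$ are both the straight segment from $u$ to $v$, as claimed. The main obstacle is the bookkeeping of orientation and sign conventions: one must pin down that $R$ is on the left so that convexity makes \emph{all} non-endpoint contributions nonpositive while Gauss-Bonnet supplies exactly $+2\pi$. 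Once the signs are fixed, the twin bounds $\theta_u,\theta_v \le \pi$ squeeze every term to equality and the conclusion is immediate.
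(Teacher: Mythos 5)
Your proposal is correct and follows essentially the same route as the paper: apply Gauss--Bonnet to the simply connected region bounded by the concatenation of $\Pi_1(u,v)$ and $\Pi_2(v,u)$, use convexity to make both paths contribute nonpositive total curvature, bound the two exterior angles at $u,v$ by $\pi$ each, and force equality throughout. Your additional care with orientation and with deducing $\kappa\equiv 0$ from a vanishing integral of a nonpositive continuous function is a welcome tightening of a step the paper leaves implicit, but it is not a different argument.
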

\begin{proof}
Let $Q=Q[\Pi_1,\Pi_2]$, which is the closed region between $\Pi_1$ and $\Pi_2$, pretending there being no obstacles. If we consider $\partial Q$, the concatenation of $\Pi_1(u,v)$ and $\Pi_2(v,u)$, it is in fact a loop bounding a simply connected region. By the Gauss-Bonnet Theorem \ref{thm: gauss bonnet} , the total curvature along $\partial Q$  equals $2\pi \chi(Q)\geq2\pi$. We decompose the value $2\pi\chi(Q)$ as the sum of total curvature of $\Pi_1$ and $\Pi_2$ respectively, and the exterior angles at $u$ and $v$. Because of convexity, both $\Pi_1$ and $\Pi_2$ have total curvature no greater than $0$. As for the two angles at $u,v$, neither can exceed $\pi$. Therefore the total curvature of the loop does not exceed $2\pi$, and could only achieve $2\pi$ when $\totcur(\Pi_1) = \totcur(\Pi_2) = 0$. Therefore, $\Pi_1$ and $\Pi_2$ are both straight lines connecting $u$ and $v$. 
\end{proof}

\begin{lemma}
\label{lem: touching lemma}
Suppose $\Pi_1, \Pi_2$ are two shortest $(u,v)$-paths in region $R = R[\Pi_1,\Pi_2]$, with $\Pi_1\cap\Pi_2=\{u,v\}$. Then each of $\Pi_1, \Pi_2$ touches at least one obstacle in $R$.  
\end{lemma}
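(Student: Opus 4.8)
The plan is to argue by contradiction, combining Lemma~\ref{lem: non touching shortest path in its homotopy is convex} with Lemma~\ref{lem: multiple convex lemma}. Suppose that one of the two shortest paths, say $\Pi_1$, touches no obstacle inside $R = R[\Pi_1,\Pi_2]$. Since $\Pi_1$ is a shortest $(u,v)$-path in $R$ and touches no obstacle, Lemma~\ref{lem: non touching shortest path in its homotopy is convex} applies directly and tells us that $\Pi_1$ is convex in $R$. The goal is then to show that the same reasoning forces $\Pi_2$ to be convex as well, so that Lemma~\ref{lem: multiple convex lemma} collapses both paths to the straight segment $\overline{uv}$, which will be the source of the contradiction.

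The main obstacle is handling $\Pi_2$. Lemma~\ref{lem: non touching shortest path in its homotopy is convex} is stated for a path that touches no obstacle, so to invoke it for $\Pi_2$ I would first want to know that $\Pi_2$ also touches no obstacle in $R$. This is where I expect the real work to lie, and there are two natural cases. If $\Pi_2$ touches no obstacle, then by symmetry (swapping the roles of $\Pi_1$ and $\Pi_2$, noting that $R[\Pi_1,\Pi_2] = R[\Pi_2,\Pi_1]$ and that $\Pi_2$ is a shortest $(v,u)$-path) Lemma~\ref{lem: non touching shortest path in its homotopy is convex} makes $\Pi_2$ convex, and then Lemma~\ref{lem: multiple convex lemma} shows both are the straight line $\overline{uv}$. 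But two shortest paths that are both equal to the straight segment $\overline{uv}$ cannot satisfy $\Pi_1 \cap \Pi_2 = \{u,v\}$ unless they coincide entirely, contradicting the hypothesis that they are distinct paths meeting only at $u$ and $v$. This yields the contradiction in this case.

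The remaining case is that $\Pi_1$ touches no obstacle while $\Pi_2$ does touch some obstacle $O \subset R$. Here I would exploit the fact that $\Pi_1$ is a straight-line-like convex barrier separating the interior of $R$ into the region bounded by $\Pi_1$ and $\Pi_2$. Intuitively, if $\Pi_1$ is convex and bends away from $\Pi_2$, then $\Pi_2$ — as a shortest path on the other side of the obstacle — is forced to wrap around $O$ in a way that lengthens it beyond the straight segment; comparing $l(\Pi_2)$ to the convex side should give a strict inequality $l(\Pi_2) > l(\Pi_1)$, contradicting that both are shortest $(u,v)$-paths of equal length. Concretely, I would consider the straight segment $\overline{uv}$: since $\Pi_1$ is convex and touches no obstacle, the segment $\overline{uv}$ lies in the closed region $Q[\Pi_1,\Pi_2]$ and is at least as short as $\Pi_1$, hence is itself a shortest path; but if $\Pi_2$ must detour around an obstacle it cannot also equal $\overline{uv}$ in length. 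The delicate point is ensuring the straight segment actually stays inside the obstacle-containing region $R$, which is exactly where the obstacle touched by $\Pi_2$ could obstruct it, and this is the step that must be argued carefully rather than waved through.

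Thus the structure is: assume for contradiction that some shortest path, without loss of generality $\Pi_1$, touches no obstacle; deduce its convexity via Lemma~\ref{lem: non touching shortest path in its homotopy is convex}; then in each sub-case derive a contradiction either from Lemma~\ref{lem: multiple convex lemma} (forcing coincidence with $\overline{uv}$ and violating distinctness) or from a direct length comparison showing $\Pi_2$ cannot match the minimal length while detouring around an obstacle. I anticipate that the cleanest writeup reduces everything to the observation that a non-touching convex shortest path forces the straight segment to be a competing geodesic, and the only way to avoid the distinctness contradiction is for the other path to be pushed off the segment by an obstacle it must therefore touch.
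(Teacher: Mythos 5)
Your overall skeleton is the paper's: argue by contradiction, get convexity of the untouched path from Lemma~\ref{lem: non touching shortest path in its homotopy is convex}, and let Lemma~\ref{lem: multiple convex lemma} collapse both paths to the segment $\overline{uv}$, contradicting $\Pi_1\cap\Pi_2=\{u,v\}$. Your first sub-case (neither path touches an obstacle) is exactly the paper's endgame and is fine. The genuine gap is your second sub-case, where $\Pi_2$ does touch an obstacle, and it is a gap you half-acknowledge but do not close. Two things go wrong there. First, the chord $\overline{uv}$ need not lie in $R[\Pi_1,\Pi_2]$ at all --- an obstacle can sit squarely on it --- so it is not a competing path and cannot certify anything about shortest lengths in $R$. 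Second, even setting that aside, the strict inequality $l(\Pi_2)>l(\Pi_1)$ does not follow from ``$\Pi_2$ detours around an obstacle'': convexity of $\Pi_1$ does not make $\Pi_1$ straight (a convex arc can be strictly longer than its chord), so both paths can exceed $|\overline{uv}|$ and still have equal length. Your comparison is implicitly against $\overline{uv}$ when it needs to be against $\Pi_1$.

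The paper avoids this case split entirely with one device you are missing: pass to the simply connected region $Q$ obtained from $R$ by filling in (removing) the obstacles. In $Q$ there are no obstacles, so the hypothesis ``touches no obstacle'' of Lemma~\ref{lem: non touching shortest path in its homotopy is convex} is vacuous for $\Pi_2$; since $l(\Pi_1)=l(\Pi_2)$, both are shortest $(u,v)$-paths in $Q$, hence $\Pi_2$ (parameterized from $v$ to $u$) is also convex, and Lemma~\ref{lem: multiple convex lemma} applies to the pair directly. In other words, the obstacle that $\Pi_2$ might touch is neutralized by deleting it from the ambient region rather than by a length estimate. To repair your write-up, replace your second sub-case with this passage to $Q$ (or supply an actual argument that a convex, obstacle-free shortest $\Pi_1$ forces $\overline{uv}\subset R$, which is essentially as hard as the lemma itself).
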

\begin{proof}
Suppose that the conclusion is false. Without loss of generality, $\Pi_1$ does not touch any obstacles in $R$. By Lemma \ref{lem: non touching shortest path in its homotopy is convex}, the $(u,v)$-path $\Pi_1$ is convex in $R$.
Let $Q$ be the simply connected region obtained by removing the obstacles in $R$. We have $l(\Pi_1)=l(\Pi_2)$, so they are both shortest $(u,v)$-paths in the simply connected environment $Q$. 
Therefore $\Pi_2$ is also convex by Lemma \ref{lem: non touching shortest path in its homotopy is convex}, if parameterized as a path from $v$ to $u$. By Lemma \ref{lem: multiple convex lemma}, $\Pi_1$ and $\Pi_2$ are both straight lines connecting $u,v$, which contradicts  $\Pi_1 \cap \Pi_2 = \{ u,v\}$. 
This  proves that when there is more than one shortest $(u,v)$-path, each of these paths must touch an obstacle.
\end{proof}

This concludes our topological and geometric preliminaries.

%
%
%

\setcounter{figure}{0}
\section{Lion's Strategy in a  $\cat$ space}
\label{sec:lion}

In this section, we describe a winning strategy for a single pursuer in a compact $\cat$ domain, and prove Theorem \ref{thm:lion}.  Our strategy generalizes \style{lion's strategy} for pursuit in  $\euc^2$, introduced by  Sgall \cite{sgall}. This strategy was adapted for pursuit in polygonal regions by Isler, Kannan and Khanna \cite{isler05tro}. Their adaptation relies heavily on the vertices of the polygon $P$ and gives a capture time of $n \cdot \diam(P)^2$, where $n$ is the number of vertices of $P$. We give a topological version of lion's strategy that succeeds in any compact CAT(0) domain $\Domain$ (including polygons) with capture time bounded by $\diam(\Domain)^2$.

Sgall's version of lion's strategy proceeds as follows. Fix a point $c$ as the center of our pursuit,
see Figure \ref{fig:sgall-lion}. The pursuer starts at $p=c$ and the evader starts at some point $e$. On her first move, the pursuer moves directly towards $e$ along the line $c e$. Considering a general round, the pursuer will be on the line segment between $c$ and $e$ prior to the evader move. After the evader moves to $e' \in B(e, 1)$,  the pursuer looks at the circle centered at $p$ with radius $\epsilon$. If $e$ is inside this circle, then the pursuer can capture the evader. Otherwise, this circle intersects the line segment $c e'$ at two points. The pursuer moves to the point $p'$ that is closer to $e$. 

\begin{lemma}[Sgall \cite{sgall}]
\label{lemma:sgall}
A pursuer using lion's strategy in $\euc^2$  re-establishes her location on the line segment between $c$ and the evader. Furthermore, if the evader moves from $e$ to $e'$ and the pursuer moves from $p$ to $p'$ then $d(c,p')^2 \geq d(c,p)^2 + 1$. \hfill $\Box$
\end{lemma}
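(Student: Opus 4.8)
The plan is to fix coordinates, pin down the pursuer's one-step response precisely, and then reduce the stated inequality to a single scalar comparison that is settled by one algebraic identity. Place $c$ at the origin and, on the round under consideration, take the ray $ce$ to be the positive $x$-axis, so that by the inductively maintained invariant the pursuer sits at $p=(r,0)$ with $r=d(c,p)$ and the evader sits at $e=(R,0)$ with $R=d(c,e)\ge r$ (as $p$ lies on the segment $ce$). After the evader moves to $e'=(x',y')$ with $d(e,e')\le 1$, the prescribed response is to move onto the segment $ce'$: if $d(p,e')\le 1$ she steps to $e'$ and captures, so I would dispose of that case and assume $d(p,e')>1$, letting $p'$ be the intersection of the unit circle about $p$ with the segment $ce'$ that is farther from $c$. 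The two things to prove are that this $p'$ exists on $ce'$ (re-establishing the invariant) and that $d(c,p')^2\ge r^2+1$.

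First I would record the one identity that drives everything. Expanding squared distances gives
\[
d(p,e')^2-d(e,e')^2=(x'-r)^2-(x'-R)^2=(R-r)\,(2x'-R-r).
\]
In the surviving case $d(p,e')^2>1\ge d(e,e')^2$, and $R>r$ (equality $R=r$ forces $p=e$, hence $d(p,e')=d(e,e')\le 1$, a capture). Dividing by the positive factor $R-r$ yields $2x'-R-r>0$, i.e.\ $x'>(R+r)/2\ge r$. On the other hand $d(e,e')\le 1$ gives $y'^2\le 1$, so $|y'|\le 1$. Combining, $r\,|y'|\le r<x'$, which says exactly that the point of $ce'$ lying directly ``above'' $p$, namely $(r,\,ry'/x')$, is at distance $r|y'|/x'<1$ from $p$. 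Hence $ce'$ meets the interior of the unit disc about $p$ while its endpoint $e'$ lies outside it, so the far intersection point $p'$ exists on $ce'$ and the invariant is re-established.

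It remains to prove the distance inequality, and this is the delicate step. Writing $\theta=\angle e\,c\,e'$ and applying the law of cosines in triangle $c\,p\,p'$ with $d(p,p')=1$ gives $1=r^2+d(c,p')^2-2r\,d(c,p')\cos\theta$, whence $d(c,p')^2=r^2+1+2r\bigl(d(c,p')\cos\theta-r\bigr)$. Since $d(c,p')\cos\theta$ is precisely the $x$-coordinate of $p'$, the whole claim collapses to showing $p'_x\ge r$. But $p'$ lies on the ray $ce'$ farther out than the ``above'' point $(r,ry'/x')$ found above (that point is interior to the disc, $e'$ is exterior, and the $x$-coordinate increases monotonically along the ray since $x'>0$), so $p'_x\ge r$, giving $d(c,p')^2\ge r^2+1=d(c,p)^2+1$. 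The subtlety I would flag is that the bound $p'_x\ge r$ genuinely uses the non-capture hypothesis $d(p,e')>1$: it is what forces $x'>r$ through the identity, and without it (e.g.\ in the degenerate configuration $p=e$) the turning angle $\theta$ can be large enough that the pursuer's landing point retreats in the $x$-direction and the inequality fails. Thus the case split on $d(p,e')$ is essential rather than cosmetic, and it is the place where the argument must be handled with care.
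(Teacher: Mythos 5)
Your proof is correct. Note that the paper itself gives no argument for this lemma at all: it is stated with an immediate $\Box$ and attributed to Sgall \cite{sgall}, so there is nothing internal to compare against; what you have written is essentially the standard coordinate proof from Sgall's paper. Your key identity $d(p,e')^2-d(e,e')^2=(R-r)(2x'-R-r)$ correctly forces $x'>r$ in the non-capture case, the intermediate-value argument shows the far intersection $p'$ of the unit circle about $p$ with the segment $ce'$ exists with $p'_x> r$, and the law-of-cosines reduction $d(c,p')^2=r^2+1+2r\bigl(p'_x-r\bigr)\ge r^2+1$ closes the claim; your observation that the case split on $d(p,e')\le 1$ is essential (it is what rules out $R=r$ and hence delivers $x'>r$) is exactly the right point of care.
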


\begin{figure}[t]

\begin{center}

\begin{tikzpicture}

\draw[fill] (0,0) circle (2pt);

\draw[fill] (10:4) circle (2pt);
\draw[fill] (10:6) circle (2pt);

\draw[dashed] (0,0) -- (10:7.5);
\draw[dashed] (0,0) -- (16:7.5);

\draw (10:4) circle (.75);

\draw[fill] (16:6.3) circle (2pt);
\draw[fill] (16:4.6) circle (2pt);

\draw[-latex] (10:6) -- (15.35: 6.28);
\draw[-latex] (10:4) -- (15.5: 4.55);

\node[above left] at (0,0) {$c$};

\node[below right] at (10:4) {$p$};
\node[above right] at (16:4.6) {$p'$};

\node[above] at (16:6.3) {$e'$};
\node[below] at (10:6)  {$e$};

\end{tikzpicture}

\caption{Lion's strategy in $\euc^2$. On each move, the pursuer moves on the line segment connecting the center $c$ to the evader, and increases her distance from $c$.}
\label{fig:sgall-lion}

\end{center}

\end{figure}
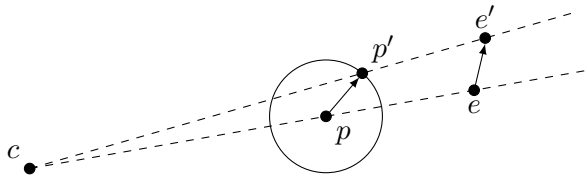

Before generalizing  lion's strategy, we introduce of the basics of a $\cat$ geometry; see \cite{bridson+haefliger} for a thorough treatment.
A complete metric space $(X,d)$ is a \style{geodesic space} when there is a unique path $\Pi(x,y)$ whose length is the metric distance $d(x,y)$. This path $\Pi(x,y)$ is called a geodesic (or shortest path). A \style{triangle} $\triangle xyz$ between three points $x,y,z \in X$ is the triple of  geodesics $\Pi(x,y), \Pi(y,z), \Pi(z,x)$.  To each $\triangle xyz \in X$, we associate a \style{comparison triangle} $\triangle \x \y \z \subset \euc^2$ whose side lengths in $\euc^2$ are the same as the lengths of the corresponding geodesics in $X$.  The complete geodesic metric space $(X,d)$ is $\cat$ when  no triangle in $X$ has a geodesic chord that is longer than the corresponding chord in the comparison triangle. In other words, pick any triangle $\triangle xyz$ and any points  $u \in \Pi(x,y)$ and $v \in \Pi(y,z)$. Let $\u \in \x\y$ and $\v \in \y\z$  be the corresponding points, chosen distancewise on the edges $\x\y$ and $\y\z$. If the space $X$ is $\cat$ then $d_X(u,v) \leq d_{\euc^2}(\u,\v)$. Colloquially, this is called the ``no fat triangles'' property, since it also implies that the sum of the angles of the triangle is not greater than $\pi$.

 Our $\cat$ lion's strategy generalizes the \style{extended lion's strategy} for polygons of  Isler et al.~\cite{isler05tro} by taking a more topological approach.
The pursuer starts at a fixed center point $c$ and her goal is to stay on the shortest path $\Pi(c,e)$ at all times.  In particular, assume that $p_t$ is on the shortest path $\Pi(c,e_t)$ and that the evader moves from $e_t$ to $e_{t+1}$. 
If $d(p_t, e_{t+1}) \leq 1$ then the purser responds by capturing the evader. Otherwise, the pursuer draws the unit circle $C$ centered at $p_t$ and moves to the point in $C \cap \Pi(c,e_{t+1})$ that is closest to $e_{t+1}$.
 
 \begin{lemma}[Lion's Strategy]
\label{lemma:lion}
A pursuer using lion's strategy in a $\cat$ space $(X,d)$  re-establishes her location on the line segment between $c$ and the evader. Furthermore, if the evader moves from $e$ to $e'$ and the pursuer moves from $p$ to $p'$ then $d(c,p')^2 \geq d(c,p)^2 + 1$. 

\end{lemma}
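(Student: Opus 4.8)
The plan is to reduce the $\cat$ statement to the Euclidean case already settled by Sgall's Lemma~\ref{lemma:sgall}, by passing to a comparison triangle. Write $p = p_t$, $e = e_t$, $e' = e_{t+1}$, and assume $d(p,e') > 1$ (otherwise the pursuer captures on this move and there is nothing to prove). Set $a = d(c,p)$, so that $p$ lies on the geodesic $\Pi(c,e)$ at arc length $a$ from $c$. I would form the geodesic triangle $\triangle c\,e\,e'$ and its Euclidean comparison triangle $\triangle \tilde{c}\,\tilde{e}\,\tilde{e}'$, in which $p$ corresponds to the point $\tilde{p}$ on the side $\tilde{c}\tilde{e}$ at distance $a$ from $\tilde{c}$.

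The central observation is that this comparison triangle is exactly an instance of the Euclidean lion's game centered at $\tilde{c}$: the evader sits at $\tilde{e}$ and moves to $\tilde{e}'$, where $d_{\euc^2}(\tilde{e},\tilde{e}') = d(e,e') \le 1$, while the pursuer sits at $\tilde{p}$ on the segment $\tilde{c}\tilde{e}$. Applying Sgall's Lemma~\ref{lemma:sgall} in this plane produces a point $\tilde{p}'$ on the segment $\tilde{c}\tilde{e}'$ with $d_{\euc^2}(\tilde{p},\tilde{p}') \le 1$ and $d_{\euc^2}(\tilde{c},\tilde{p}')^2 \ge a^2 + 1$. Writing $b = d_{\euc^2}(\tilde{c},\tilde{p}')$, Sgall's conclusion that $\tilde{p}'$ lies on the segment gives $b \le d(c,e')$, so the point $q$ on $\Pi(c,e')$ at arc length $b$ from $c$ is well defined and is precisely the geodesic point corresponding to $\tilde{p}'$.

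Next I would pull this estimate back into $X$ using the $\cat$ chord inequality with apex at $c$. Taking the triangle with vertices $e,c,e'$, the points $p \in \Pi(c,e)$ and $q \in \Pi(c,e')$ lie on the two sides meeting at $c$, and their comparison points are exactly $\tilde{p}$ and $\tilde{p}'$; hence $d_X(p,q) \le d_{\euc^2}(\tilde{p},\tilde{p}') \le 1$. Thus $\Pi(c,e')$ contains a point $q$ within distance $1$ of $p$, while its far endpoint $e'$ satisfies $d(p,e') > 1$. Since $x \mapsto d(p,x)$ is continuous along $\Pi(c,e')$, the intermediate value theorem yields a point $p'$ on the subpath $\Pi(q,e')$ with $d(p,p') = 1$; this shows the unit circle about $p$ really does meet $\Pi(c,e')$, so the pursuer can re-establish her position on the geodesic to the evader.

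It then remains to verify the distance bound for the point the strategy actually selects, namely the intersection of the unit circle with $\Pi(c,e')$ closest to $e'$. Along this geodesic one has $d(x,e') = d(c,e') - d(c,x)$ for every $x \in \Pi(c,e')$, so ``closest to $e'$'' coincides with ``largest arc length from $c$.'' The intersection point produced above lies on $\Pi(q,e')$ and so has arc length at least $b$; hence the selected point $p'$ satisfies $d(c,p') \ge b$, giving $d(c,p')^2 \ge b^2 \ge a^2 + 1 = d(c,p)^2 + 1$, as claimed. The hard part will be the bookkeeping of the comparison correspondence---identifying $q$ as the comparison point of $\tilde{p}'$ and invoking the chord inequality with apex at the fixed center $c$ rather than at the moving vertex---together with the continuity argument needed to upgrade the $\cat$ estimate $d(p,q) \le 1$ into the exact unit-distance move that the strategy prescribes.
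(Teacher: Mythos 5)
Your proof is correct and follows essentially the same route as the paper's: pass to the Euclidean comparison triangle of $\triangle c\,e\,e'$, apply Sgall's Lemma~\ref{lemma:sgall} there, and pull the estimate back via the $\cat$ chord inequality. The only difference is that you add an intermediate-value argument to match the literal "unit circle" description of the strategy, whereas the paper simply moves the pursuer directly to the comparison point of $\tilde{p}'$; both yield the same bound.
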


\begin{proof}
Suppose that $p \in \Pi(c,e)$ and then the evader moves to $e'$. Consider the $\cat$ triangle $\triangle c e e'$ and its comparison triangle
$\triangle \c \e \e'$ in $\euc^2$. Look at the corresponding $\euc^2$ pursuit-evasion game  with the pursuer at $\p \in \c \e$. By Lemma \ref{lemma:sgall}, the pursuer can move to a point $\p' \in \c \e'$ such that $d_{\euc^2}(\c,\p')^2 \geq d_{\euc^2}(\c,\p)^2 + 1$. Since there are no fat triangles in $X$, we have $d_{X}(p,p') \leq d_{\euc^2}(\p, \p')$ where $p' \in \Pi(c,e')$ is the point corresponding to $\p'$. Therefore, in our original game, the pursuer can move to the point $p' \in \Pi(c,e')$, which satisfies
$d_{X}(c,p')^2 \geq d_{X}(c,p)^2 + 1$.
\end{proof}

Finally, we prove Theorem \ref{thm:lion}: lion's strategy succeeds in a $\cat$ domain.

\begin{proofof}{Theorem \ref{thm:lion}}
Consider a pursuit-evasion game in the compact $\cat$ domain $\Domain$. Pick any $c \in \partial D$ as our center point. Using lion's strategy, the pursuer increases her distance from $c$ with every step by Lemma \ref{lemma:lion}, so she captures the evader after at most $\diam(\Domain)^2$ rounds.
\end{proofof}

%
%
%

\setcounter{figure}{0}
\section{Minimal Paths and Guarding}
\label{sec:guarding}

The key to our pursuit strategy is the ability of one pursuer to \style{guard} a shortest path, meaning that the evader cannot cross this path without being caught by a pursuer. When this shortest path splits the domain into two subdomains, the evader will be trapped in a smaller region. We refer to this region as the \style{evader territory}. 
In fact, we will be able to also guard a ``second shortest path'' when the shortest path is already guarded.  The definitions and lemmas in this section are adaptations of the minimal path formulations introduced in  \cite{bhadauria+klein+isler+suri} and further developed in \cite{ames}.
Recall that we use $d(x,y)$ to denote the length of a shortest $(x,y)$-path in $\Domain$. In addition, we will use $\mathring{X}$ and $\overline{X}$ to denote the interior and the closure of a set $X$, respectively.

\begin{definition}
Let $X \subset \Domain$ be a path-connected region. The simple path $\Pi \subset X$ is \style{minimal in $X$} when for any $y_1, y_2 \in \Pi$ and any $z \in X$, we have
$ d_{\Pi}(y_1,y_2)\leq d_{X}(y_1,z)+d_{X}(z,y_2)$.
\end{definition}

\begin{definition}
Let $Z \subset X$ and let $\Pi$ be a minimal $(u,v)$-path in $Z$ where $u,v \in \partial Z$. Then the \style{path projection with anchor $u$} is the function $\pi:Z \rightarrow \Pi$  defined as follows.  If $d_Z(u,z) < d_Z(u,v) = d_{\Pi}(u,v)$, then $\pi(z)$ is the  unique point $x \in \Pi$ with $d_{\Pi}(u,x) = d_Z(u,z)$. For the remaining $z \in Z \backslash \Pi$, we set $\pi(z) = v$.
\end{definition}

We make a few observations. If $X=\Domain$, then  a shortest $(x,y)$-path is always a minimal path in $\Domain$. In this case, we can define the path projection $\pi: \Domain \rightarrow \Pi$. When $X \subsetneq \Domain$, we might have $d_X(x_1, x_2) > d(x_1, x_2)$. In this case,  a shortest path in $X$ will be  minimal  in $X$, but it will not be minimal in $\Domain$. Next, we show that a path projection  is non-expansive, meaning that distances cannot increase. 

\begin{lemma}
\label{lemma:project}
Let  $\pi: Z \rightarrow \Pi$ be a path projection onto a minimal path in $Z$. Then  $d_{\Pi}(\pi(z_1),\pi(z_2)) \leq d_Z(z_1,z_2)$ for all $z_1, z_2 \in Z$. 
\end{lemma}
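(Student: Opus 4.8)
The plan is to prove that the path projection $\pi$ is non-expansive by reducing to cases based on where $z_1, z_2$ project on the minimal path $\Pi$. First I would recall the definition: for a point $z$ with $d_Z(u,z) < d_Z(u,v)$, the projection $\pi(z)$ is the unique point $x \in \Pi$ with $d_\Pi(u,x) = d_Z(u,z)$; and for all remaining points $z$ we set $\pi(z) = v$. Since $\Pi$ is a simple $(u,v)$-path, arc length along $\Pi$ from the anchor $u$ gives a natural linear coordinate, and I would exploit the identity $d_\Pi(\pi(z_1), \pi(z_2)) = |d_\Pi(u, \pi(z_1)) - d_\Pi(u, \pi(z_2))|$.

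The core of the argument is a case analysis on whether $z_1, z_2$ fall in the ``interior'' region (where $d_Z(u,z) < d_Z(u,v)$, so the projection equals a genuine arc-length value) or in the ``tail'' region (where the projection is pinned at $v$, at arc-length $d_\Pi(u,v)$ from $u$).

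In the main case, both $z_1, z_2$ lie in the interior region. Then $d_\Pi(u,\pi(z_i)) = d_Z(u,z_i)$, so
\[
d_\Pi(\pi(z_1),\pi(z_2)) = |d_Z(u,z_1) - d_Z(u,z_2)| \leq d_Z(z_1,z_2),
\]
where the final inequality is just the triangle inequality for the metric $d_Z$ (a reverse triangle inequality in this form). The boundary cases are where at least one $z_i$ lies in the tail and projects to $v$. I would handle the subcase where $z_1$ is interior and $z_2$ is in the tail by writing
\[
d_\Pi(\pi(z_1),\pi(z_2)) = d_\Pi(u,v) - d_Z(u,z_1),
\]
and then using minimality of $\Pi$: by definition, $d_\Pi(u,v) \leq d_Z(u,z_2) + d_Z(z_2,v)$, but the cleaner route is to invoke the minimal-path inequality with $y_1 = u$, $y_2 = \pi(z_1)$, and the point $z_2$, combined with the fact that $d_Z(u,z_2) \geq d_Z(u,v)$ in the tail. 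The case where both $z_1, z_2$ are in the tail is trivial since then $\pi(z_1) = \pi(z_2) = v$ and the left side is zero.

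The main obstacle I anticipate is the boundary case precisely: showing $d_\Pi(u,v) - d_Z(u,z_1) \leq d_Z(z_1,z_2)$ when $z_2$ sits in the tail. Here the defining minimality property of $\Pi$ (that $d_\Pi(y_1,y_2) \leq d_Z(y_1,z) + d_Z(z,y_2)$ for all $y_1,y_2 \in \Pi$ and $z \in Z$) must be deployed with the right choice of endpoints on $\Pi$ — taking $y_1 = \pi(z_1)$ and $y_2 = v$ and the intermediate point $z = z_2$ — so that $d_\Pi(\pi(z_1), v) \leq d_Z(\pi(z_1), z_2) + d_Z(z_2, v)$; some care is then needed to bound these $d_Z$ terms by $d_Z(z_1, z_2)$ using $d_Z(z_1, \pi(z_1)) = 0$ style relationships or the triangle inequality, ensuring the pinning at $v$ does not inflate the distance. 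Verifying that this single inequality handles all orientations along $\Pi$ is the delicate part; the interior-interior case is essentially immediate.
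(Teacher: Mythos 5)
Your main case is exactly the paper's proof: for $z_1,z_2$ with $d_Z(u,z_1)\leq d_Z(u,z_2)\leq d_Z(u,v)$, the reverse triangle inequality gives $d_\Pi(\pi(z_1),\pi(z_2)) = d_Z(u,z_2)-d_Z(u,z_1) \leq d_Z(z_1,z_2)$, and the paper dismisses the remaining cases as ``argued similarly.'' Your tail--tail case is trivially right. The only wobble is the mixed case, which you flag as delicate but which closes in one line by the very same reverse triangle inequality: since $z_2$ in the tail means $d_Z(u,z_2)\geq d_Z(u,v)=d_\Pi(u,v)$, you get
$d_\Pi(\pi(z_1),v) = d_\Pi(u,v)-d_Z(u,z_1) \leq d_Z(u,z_2)-d_Z(u,z_1) \leq d_Z(z_1,z_2)$.
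No invocation of the minimality inequality with intermediate point $z_2$ is needed here (minimality is only used to know $d_\Pi(u,v)=d_Z(u,v)$ and that the projection is well defined), and be careful: the relation $d_Z(z_1,\pi(z_1))=0$ you float is false in general, since $z_1$ need not lie anywhere near its projection. With that one correction the argument is complete and matches the paper's approach.
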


\begin{proof}
The proof is a straight-forward argument using the triangle inequality. We consider the case
$z_1, z_2 \in Z$ with $d(u,z_1) \leq d(u,z_2) \leq d(u,v)$.   We have
\begin{align*}
d_Z(z_2, z_1) 
&\geq
d_Z(z_2, u) - d_Z(z_1,u)  \, = \, d_{\Pi} (\pi(z_2),u) - d_{\Pi} (\pi(z_1),u) 
\, = \, d_{\Pi} (\pi(z_2), \pi(z_1)) .
\end{align*}
The other non-trivial cases are argued similarly.
\end{proof}

A single pursuer can turn a minimal path $\Pi$ into an impassable boundary:  once the pursuer has attained the position $p = \pi(e)$, the evader cannot cross $\Pi$ without being captured in response. The proof  of the following lemma is similar to the analogous result in \cite{ames}, but we include this brief argument for completeness. 

\begin{lemma}[Guarding Lemma]
\label{lemma:guarding}
Let $\pi: X \rightarrow \Pi$ be a path projection onto the minimal $(u,v)$-path $\Pi \subset X$.  Consider a pursuit-evasion game between pursuer $p$ and evader $e$ in the environment $X$. 
\begin{enumerate}
\item[(a)] After $O(\diam(X))$ turns, the pursuer can attain $p^t=\pi(e^{t-1})$.
\item[(b)] Thereafter, the pursuer can re-establish $p^{s+1} =\pi( e^{s})$ for all $s \geq t$.
\item[(c)]  If the evader moves so that a shortest path from $e^{s-1}$ to $e^s$ intersects $\Pi$, then the pursuer can capture the evader at time $s+1$.
\end{enumerate}

\end{lemma}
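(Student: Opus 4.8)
The plan is to prove the three parts in sequence, exploiting the non-expansiveness of the path projection (Lemma \ref{lemma:project}) as the central tool, since that lemma is precisely what lets the pursuer track the evader's projection without falling behind.

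For part (a), I would first argue that the pursuer can reach \emph{any} designated point of $\Pi$ (say the anchor $u$) in $O(\diam(X))$ turns simply by walking along a shortest path to it at unit speed. Once the pursuer sits on $\Pi$, I claim she can attain $p^t = \pi(e^{t-1})$ within a further $O(\diam(X))$ turns. The idea is that the target point $\pi(e^{t-1})$ lives on $\Pi$, and while the pursuer chases it along $\Pi$, the target itself moves by at most one unit per evader step: indeed, by Lemma \ref{lemma:project}, $d_\Pi(\pi(e^{s}),\pi(e^{s-1})) \le d_X(e^{s},e^{s-1}) \le 1$. So the pursuer, moving at unit speed along $\Pi$ toward a target that drifts by at most one unit per turn, closes the initial gap of at most $\diam(X)$ and catches up in $O(\diam(X))$ turns. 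This is the standard ``pursuit on a line'' argument; the only subtlety is that $\Pi$ is a curve, but since the pursuer and the projected point both move along the single path $\Pi$, the one-dimensional argument applies verbatim using arc length $d_\Pi$.

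Part (b) is the invariant-maintenance step and is really the heart of the lemma. Assuming $p^s = \pi(e^{s-1})$ has already been established, the evader moves to $e^s$ with $d_X(e^{s-1},e^s)\le 1$. I must show the pursuer can reach $\pi(e^s)$ in a single unit-length move. By the triangle inequality along $\Pi$ and Lemma \ref{lemma:project},
\[
d_\Pi(p^s,\pi(e^s)) = d_\Pi(\pi(e^{s-1}),\pi(e^s)) \le d_X(e^{s-1},e^s) \le 1,
\]
so the target is within arc-length one along $\Pi$, and since moving along $\Pi$ by arc length $\ell$ costs Euclidean distance at most $\ell$, the pursuer reaches $\pi(e^s)$ in one turn. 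This re-establishes $p^{s+1}=\pi(e^s)$, and induction on $s$ closes the argument.

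Part (c) is the capture payoff. Suppose a shortest $(e^{s-1},e^s)$-path crosses $\Pi$ at a point $w$. The pursuer is at $p^s=\pi(e^{s-1})$ before the evader's move. The key computation is to bound $d_X(p^s,e^s)$ and show it is at most one once we account for the crossing. Because $w\in\Pi$ and the evader's displacement path through $w$ has total length at most one, the portion from $e^{s-1}$ to $w$ and from $w$ to $e^s$ split that unit budget; combining the minimality of $\Pi$ (which controls $d_\Pi(\pi(e^{s-1}),w)$ against detours through $w$) with these sub-unit distances should yield $d_X(\pi(e^{s-1}), e^s)\le 1$, letting the pursuer capture at time $s+1$. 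The main obstacle I anticipate is precisely this last estimate: I must carefully use the minimality definition $d_\Pi(y_1,y_2)\le d_X(y_1,z)+d_X(z,y_2)$ with $z=w$ to convert the fact that the evader's path touches $\Pi$ into a genuine distance bound on the pursuer's required move, rather than merely asserting the evader ``crossed'' the guarded curve. Getting the bookkeeping of which point plays the role of $z$ and verifying the inequality chain tightens to $\le 1$ is where the care is needed.
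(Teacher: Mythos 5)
Your proposal is correct and follows essentially the same route as the paper: part (a) is the paper's bounded pursuit-along-$\Pi$ argument (the paper phrases the termination as ``the evader can force the pursuer to advance away from $u$ only $O(\diam(X))$ times,'' which is the rigorous form of your line-pursuit claim), part (b) is the identical one-line application of Lemma \ref{lemma:project}, and part (c) is exactly the paper's computation. The chain you anticipate in (c) does close: with $w\in\Pi$ on the evader's shortest path one has $\pi(w)=w$, so $d(p^s,e^s)\le d_\Pi(\pi(e^{s-1}),w)+d(w,e^s)\le d(e^{s-1},w)+d(w,e^s)=d(e^{s-1},e^s)\le 1$ by non-expansiveness of $\pi$.
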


\begin{proof}
To achieve (a), the pursuer moves as follows. First, $p$ travels to $u$, reaching this point in $O(\diam(X))$ turns. Next, $p$ traverses along $\Pi$ until first achieving $d(u,p^i) \leq d(u,\pi(e^{i-1})) < d(u,p^i)+1$. If $p^i=\pi(e^{i-1})$ then we are done. Otherwise,  when the evader moves, we either have $d(u, p^i) -1 < d(u,\pi(e^i)) \leq d(u,p^i) + 1$ or $d(u,p^i) + 1 < d(u,\pi(e^i)) < d(u,p^i) +2$ by Lemma \ref{lemma:project}. In the former case, $p$ can move to $\pi(e^i)$ in response, achieving her goal. In the latter case, $p$ will increase her distance from $u$ by one unit, re-establishing $d(u,p^{i+1}) \leq d(u,\pi(e^{i})) < d(u,p^{i+1})+1$. This latter evader move can only be made  $O(\diam(X))$ times, after which the pursuer acheives $p=\pi(e)$. 

Next, suppose that $p^s=\pi(e^{s-1})$ and that $e^{s-1} \in X \backslash \Pi$. The pursuer can stay on the evader projection by induction since
$$
d_{\Pi}(p^t, \pi(e^t)) = d_{\Pi}(\pi(e^{t-1}), \pi(e^t)) \leq d(e^{t-1}, e^t) \leq 1, 
$$
so (b) holds. As for  (c),
suppose that a shortest path from $e^{s-1}$ to $e^s$ includes the point
$y \in \Pi$. Then
$$
d(p^t,e^t) \leq d_{\Pi} (\pi(e^{t-1}),y) + d(y,e^t) \leq d(e^{t-1}, y) + d(y,e^t) = d(e^{t-1},e^t)=1. 
$$
Therefore the pursuer can capture the evader on her next move.
\end{proof}

The Guarding Lemma is the cornerstone of our pursuer strategy.  When the pursuer moves as specified in the lemma, we say that she \style{guards} the path $\Pi$. In Section \ref{sec:proof}, our pursuers will repeatedly guard paths chosen to reduce the number of obstacles in the evader territory.  
Once the evader is trapped in a region that is obstacle-free, we have reached the endgame of the pursuit.

\begin{lemma}
\label{lemma:guardable pair}
 Suppose that the evader is located in a simply connected region $R$ whose boundary consists of subcurves of the original boundary $\partial \Domain$ and  two guarded paths $\Pi_1$ and $\Pi_2$. If the evader remains in $R$, then the third pursuer can capture him in finite time. If the evader tries to leave the region through $\Pi_1$ or $\Pi_2$, then he will be captured by the guarding pursuer.
 \end{lemma}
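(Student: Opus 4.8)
The plan is to reduce the first assertion to the single-pursuer result of Theorem~\ref{thm:lion} and to read off the second assertion directly from the Guarding Lemma. For the escape clause, suppose that on some turn the evader moves from $e^{s-1}$ to $e^s$ so that a shortest $(e^{s-1},e^s)$-path meets $\Pi_1$ (the case of $\Pi_2$ is identical). The pursuer guarding $\Pi_1$ has maintained $p=\pi(e^{s-1})$ by Lemma~\ref{lemma:guarding}(b), so part~(c) of that lemma applies verbatim and the guarding pursuer captures the evader on turn $s+1$. Since the arcs of $\partial\Domain$ lying on $\partial R$ are impassable walls, the two guarded paths are the only exits from $R$; hence an evader who hopes to survive must remain in $R$ forever, and it suffices to show that the third pursuer catches such an evader.

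For that first assertion, the key step is to verify that $R$, under its intrinsic (geodesic) metric, is a \cat space, after which Theorem~\ref{thm:lion} finishes the argument: once the third pursuer walks to a center $c\in\partial R$, lion's strategy captures the evader in at most $\diam(R)^2\le\diam(\Domain)^2$ further turns. First I would record that $R$ encloses no obstacle, for an obstacle trapped in its interior would contribute a nontrivial loop to the fundamental group of $R$, contradicting simple connectivity. Thus the interior of $R$ is genuinely flat and obstacle-free, while $\partial R$ is piecewise analytic, with finitely many singular points and absolute curvature bounded by $\kappa_{\max}$ at its smooth points.

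To establish the \cat property I would argue locally and then globalize. As $R$ is compact it is a complete geodesic space, and as it is simply connected the Cartan--Hadamard theorem for length spaces \cite{bridson+haefliger} reduces the claim to checking that $R$ is \emph{locally} \cat at each point. Interior points are flat, hence locally \cat; at a boundary point one verifies the Alexandrov comparison inequality in a small neighborhood, whose local model is a half-plane at a smooth point, a wedge of angle at most $\pi$ at a convex corner, and a wedge of angle in $(\pi,2\pi)$ at a reflex corner --- in every case geodesics bend so that angles in $R$ do not exceed their Euclidean comparison angles. I expect this local verification to be the main obstacle, especially along the arcs of $\partial R$ inherited from obstacle boundaries, where $R$ lies on the non-convex side and is only locally \cat rather than locally convex; here the finiteness of the singular set and the curvature bound $\kappa_{\max}$ furnished by the piecewise analytic hypothesis are exactly what keep this to a finite, controlled case analysis. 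With local \cat in hand, Cartan--Hadamard upgrades it to the global \cat property and Theorem~\ref{thm:lion} applies, capturing the evader who stays in $R$ in finitely many turns.
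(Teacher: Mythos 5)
Your proposal is correct and follows the same two-step reduction as the paper's own two-sentence proof: escape attempts through $\Pi_1$ or $\Pi_2$ are handled by Lemma \ref{lemma:guarding}, and an evader who stays in $R$ is caught by the third pursuer via Theorem \ref{thm:lion}. The only difference is that you explicitly verify that the simply connected, obstacle-free region $R$ with its intrinsic metric is a compact $\cat$ space (via Cartan--Hadamard and a local comparison check along the piecewise analytic boundary), a step the paper leaves implicit when it invokes Theorem \ref{thm:lion}; this is a reasonable and standard way to fill that gap, though note that the intrinsic diameter of $R$ need not be bounded by $\diam(\Domain)$, which affects only the quantitative bound and not the finiteness claim of the lemma.
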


\begin{proof}
By Lemma \ref{lemma:guarding},  if the evader tries to leave this region, he will be caught by either $p_1$ or $p_2$. If the evader remains in this component, then Theorem \ref{thm:lion} guarantees that  pursuer $p_3$  captures the evader in a finite number of moves.
\end{proof}


The remainder of this section is devoted to  identifying guardable paths that touch obstacles. Guarding such a path will neutralize the threat posed by the obstacle. First, we consider the case when  $p_1$ guards the unique shortest $(u,v)$-path $\Pi$ that touches an obstacle $O$ in the evader region. The objective of $p_2$ is to guard another $(u,v)$-path $\Pi_2$ of a different homotopy type. This path can be guarded even when $\Pi_2$ is longer than $\Pi_1$, provided that any path shorter than $\Pi_2$  also intersects $\Pi_1$.

\begin{lemma}
\label{lem:next shortest path}
Suppose that the evader territory $R=[\Pi_1, \Delta]$ is bounded by the unique $(u,v)$-shortest path $\Pi_1$ and another boundary curve $\Delta$. Furthermore, suppose that $\Pi_1$ touches an obstacle $O$ and that $\Pi_1$ is guarded by $p_1$.
Then we can find a $(u,v)$-path $\Pi_2 \subset R$ with the following properties: (a) $O \subset R[\Pi_1, \Pi_2]$, so  that the homotopy type of $\Pi_2$  is different than that of $\Pi_1$; and  (b) $\Pi_2$ is guardable by $p_2$, provided that $\Pi_1$ remains guarded by $p_1$.
\end{lemma}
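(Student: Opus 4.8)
The plan is to construct $\Pi_2$ as a shortest $(u,v)$-path in $R$ that passes $O$ on the opposite side from $\Pi_1$. Concretely, the $(u,v)$-paths in $R$ split into homotopy classes according to how they wind around $O$; $\Pi_1$ lies in one such class, and I would let $\Pi_2$ be a shortest representative of the neighboring class for which $O \subset R[\Pi_1,\Pi_2]$. A shortest representative exists by a standard compactness argument (lengths are bounded below and lower semicontinuous, $R$ is compact, so a length-minimizing sequence subconverges). As noted before Theorem \ref{thm:3pursuers}, such a minimizer is a concatenation of straight segments and convex arcs of $\partial O$ and $\Delta$, hence piecewise analytic, and it is simple since a self-crossing could be removed by a shortcut. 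If $\Pi_2$ happens to meet $\Pi_1$ off $\{u,v\}$, I would excise the shared subpaths so that $\Pi_1 \cap \Pi_2 = \{u,v\}$ without leaving the class. By construction $O$ lies between $\Pi_1$ and $\Pi_2$ and the two paths differ by a loop around $O$, which is exactly property (a).

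For property (b) I would invoke the Guarding Lemma (Lemma \ref{lemma:guarding}), whose hypothesis is that $\Pi_2$ be minimal in the region controlling the evader's motion. The obstruction is that $\Pi_2$ is generally not minimal in all of $R$: a subpath $\Pi_2(y_1,y_2)$ that straddles $O$ can be shortcut by a route passing $O$ on the $\Pi_1$-side. The heart of the proof is that this is precisely the route the hypotheses forbid. Since $\Pi_1$ is shortest and touches $O$, the corridor between $\Pi_1$ and $\partial O$ is pinched shut at a contact point $w \in \Pi_1 \cap \partial O$, so any $\Pi_1$-side shortcut must pass through $w$ (or run arbitrarily close to $\Pi_1$). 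But $w \in \Pi_1$, and while $p_1$ guards $\Pi_1$, part (c) of the Guarding Lemma captures any evader whose move has a shortest path meeting $\Pi_1$. Hence the competing shortcut is unavailable to the evader.

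To turn this into a clean application of the Guarding Lemma, I would take the guarding region to be $X = R[\Pi_2,\Delta]$, the side of $\Pi_2$ away from $O$, and verify that $\Pi_2$ is minimal in $X$. Here the verification is geometric rather than dynamic: $\Pi_2$ is a taut path separating $X$ from $O$, its free portions have nonpositive curvature by the local shortcutting argument of Lemma \ref{lem: non touching shortest path in its homotopy is convex}, and its touching portions follow convex arcs of $\partial O$, so any chord shortcutting a subpath $\Pi_2(y_1,y_2)$ would have to cross to the $O$-side and leave $X$. Thus each subpath of $\Pi_2$ is a shortest path within $X$, i.e. $\Pi_2$ is minimal in $X$, and Lemma \ref{lemma:guarding} lets $p_2$ attain $p_2 = \pi(e)$ and thereafter capture any move that crosses $\Pi_2$. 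The clause ``provided $\Pi_1$ remains guarded'' enters exactly here: it guarantees that the evader cannot slip from $X$ to the $O$-side (or back) through the pinched $\Pi_1$-corridor without running into $p_1$, so that confining the evader with respect to $\Pi_2$ is globally consistent.

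I expect the minimality and guardability of step (b) to be the main obstacle. The delicate points are identifying the precise region in which $\Pi_2$ is minimal (and handling the possibility that $R$, hence $X$, contains obstacles other than $O$, which may force a more careful choice of $\Pi_2$ and $X$), and arguing rigorously that the touching hypothesis together with the guardedness of $\Pi_1$ rules out every shorter connection between points of $\Pi_2$ that the evader could actually realize. Everything else — existence of the minimizer, its piecewise analyticity and simplicity, and the homotopy bookkeeping behind property (a) — should follow from the compactness and convexity tools already established in the preliminaries.
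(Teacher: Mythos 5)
Your high-level plan --- take a ``second shortest'' $(u,v)$-path around the other side of $O$ and argue that every competing shortcut is pinched through the contact point of $\Pi_1$ with $\partial O$, hence blocked by $p_1$ --- is the same idea as the paper's, but the way you instantiate $\Pi_2$ and the region in which you apply the Guarding Lemma both have genuine problems, and they are exactly the problems the paper's construction is engineered to avoid. First, Lemma \ref{lemma:guarding} requires $\Pi_2$ to be minimal in a region $X$ that contains the evader; your choice $X=R[\Pi_2,\Delta]$ excludes $R[\Pi_1,\Pi_2]$, where the evader may well be standing, so the projection and the induction in parts (a)--(b) of that lemma are not available. The minimality has to hold on essentially all of $R$ minus what $p_1$ already blocks. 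Second, ``shortest in its homotopy class'' does not give minimality once $R$ contains obstacles other than $O$: a subpath $\Pi_2(y_1,y_2)$ can be strictly shortcut inside your $X$ by a route that winds the other way around some other obstacle, and your convexity check (via Lemma \ref{lem: non touching shortest path in its homotopy is convex}) only rules out straight-chord shortcuts, not these global ones. Third, the class-shortest path can pass through the contact point $x\in\Pi_1\cap\partial O$ itself, as in Figure \ref{fig:thru x}(b); then $\Pi_1\cap\Pi_2$ contains an interior point but no shared subpath to excise, and every representative of the class avoiding $x$ is strictly longer, so you cannot arrange $\Pi_1\cap\Pi_2=\{u,v\}$ without abandoning length-minimality. (The paper notes this very case was overlooked in the polygonal predecessor.)

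The paper resolves all three issues at once by not fixing a homotopy class at all: it removes a small, relatively open, essentially triangular set $A'$ wedged between $\Pi_1$ and $\partial O$ at $x$ (bounded by pieces of $\Pi_1$ and $\partial O$ and a segment $\Gamma$, with $\dmin$ and $\kappa_{\max}$ guaranteeing $A$ is obstacle-free), and defines $\Pi_2$ as the global shortest $(u,v)$-path in the closed set $R\setminus A'$. Minimality in $R\setminus A'$ is then automatic; every strictly shorter connection in $R$ either meets $\Pi_1$ (unavailable while $p_1$ guards it) or enters $A'$ through $\Gamma$ and can be shortcut along $\Gamma$ without changing homotopy type; and $O\subset R[\Pi_1,\Pi_2]$ follows. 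You correctly identified minimality and guardability as the crux, but the fix is this excision, not a homotopy-class minimization.
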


A naive attempt to find such a path is to pick some $x \in \Pi_1 \cap \bar{O}$ and find a shortest path that does not include the point $x$. However, $R \backslash \{ x \}$ is not a closed set, which would complicate our argument. Furthermore,  it could be that the next shortest path includes $x$ without using this point as a shortcut around the obstacle $O$, as shown in Figure \ref{fig:thru x}(b).\footnote{We note that this unusual circumstance is overlooked in \cite{bhadauria+klein+isler+suri}, where it can occur during their minimal path strategy. This case can be easily handled in a manner analogous to our approach, but based on the visibility graph of their  environment.} We handle both problems by removing a small and well-chosen open region $A$ near $x$, rather than removing the point $x$. The delicate choice of $A$ relies on the existence of both $\kappa_{\max}$ and $d_{\min}.$

\begin{figure}
\begin{center}

\begin{tikzpicture}[scale=2]

\begin{scope}[shift={(0.5,0)}]

\draw[fill] (0,2) circle (1pt);
\draw[fill] (0,.5) circle (1pt);

\node[left] at (0,2) {$u$};
\node[left] at (0,.5) {$v$};
\node[left] at (-.25, 1.25) {$x$};

\draw (0,.5) -- (-.15, 1.2) -- (-.15,1.3) -- (0,2);

\draw[fill=gray!30] (.1,1.25) ellipse (.25 and .35);

\node at (.15,1.25) {$O$};

\draw[fill] (-.15,1.25) circle (1pt);

\draw[dashed] (-.15, 1.25) circle (.11);

\draw plot [smooth] coordinates  {(0,.5)  (.5, .85) (.8,1) (.4,1 .85)  (0,2) };

\node at (.75,1.5) {$\Delta$};

\node at (-.2,.75) {$\Pi_1$};

\node at (.15,0) {(a)};

\end{scope}

\begin{scope}[shift={(2,0)}]

\draw[fill] (0,2) circle (1pt);
\draw[fill] (0,.5) circle (1pt);

\node[left] at (0,2) {$u$};
\node[left] at (0,.5) {$v$};
\node[above right] at (.5,1.5) {$x$};

\draw[fill=gray!30] (.5,1.5) -- (1, .75) -- (.4, .9) --cycle ;

\node at (.6, 1.025) {$O$};

\draw (0,.5) -- (.5, 1.5) -- (0,2);

\draw[very thick, dashed] (0,2) -- (.5, 1.5) -- (1, .75) -- (0,.5);

\node at (0,1) {$\Pi_1$};
\node at (.75,.5) {$\Pi_2$};

\node at (.25,0) {(b)};

\node at (1.2,1.5) {$\Delta$};

\draw[fill] (.5,1.5) circle (1pt);

\draw plot [smooth] coordinates  {(0,.5)  (1, .3) (1.2,1) (.8,1 .85)  (0,2) };

\end{scope}

\begin{scope}[shift={(4,0)}]

\draw[fill] (0,2) circle (1pt);
\draw[fill] (0,.5) circle (1pt);

\node[left] at (0,2) {$u$};
\node[left] at (0,.5) {$v$};
\node[above right] at (.5,1.5) {$x$};

\draw[fill=gray!30] (.5,1.5) -- (1, .75) -- (.4, .9) --cycle ;

\node at (.2, 1.3) {$A$};

\draw[-latex] (.21, 1.2) -- (.4, 1.1);

\draw (0,.5) -- (.5, 1.5) -- (0,2);


\draw[fill] (.5,1.5) circle (1pt);

\begin{scope}[shift={(.5, 1.5)}]

\draw[fill] (-99:.5) circle (1pt);
\draw[fill] (-117:.5) circle (1pt);

\draw (-99:.5) -- (-117:.5);

\node[left] at (-117:.5) {$y$};
\node[right] at (-99:.5) {$z$};

\end{scope}

\draw plot [smooth] coordinates  {(0,.5)  (1, .3) (1.2,1) (.8,1 .85)  (0,2) };

\node at (.25,0) {(c)};

\end{scope}

\begin{scope}[shift={(5.5,-0.5)}, scale=1.75]

\node[above right] at (.5,1.5) {$x$};

\draw[fill=gray!30] (.5,1.5) -- (1, .75) -- (.4, .9) --cycle ;

\node at (.3, 1.3) {$A$};

\draw (.125,.75) -- (.5, 1.5);

\begin{scope}[shift={(.5, 1.5)}]

\draw[fill] (-99:.5) circle (0.5pt);
\draw[fill] (-117:.5) circle (0.5pt);

\draw (-99:.5) -- (-117:.5);

\node[left] at (-117:.5) {$y$};
\node[right] at (-99:.5) {$z$};

\end{scope}

\draw[fill] (.5,1.5) circle (0.5pt);

\draw[dashed] plot [smooth] coordinates { (.15,.5) (.37, 1.15)  (.38,.7) (1, .6) };

\end{scope}

\node at (6.25,0) {(d)};

\end{tikzpicture}

\caption{Finding the second shortest path. (a) When $\Pi \cap \partial O$ contains a curve, we can remove a small open ball. (b) The shortest path $\Pi$ touches obstacle $O$ at $x$.  The second shortest path $\Pi$ goes around $O$, but includes the point $x$. (c) Finding $\Pi_2$ requires removing a small, open, triangular set $A$ between $\Pi_1$ and $O$, and then finding the shortest $(u,v)$-path in the closed set $R \backslash A$. (d) Any path that crosses the line segment $yz$ can be short-cut.}
\label{fig:thru x}
\end{center}
\end{figure}
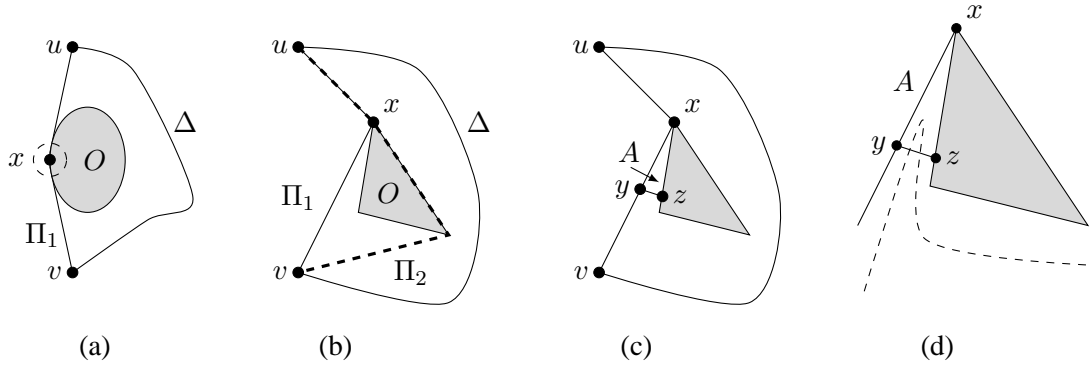

\begin{proof}
First, suppose that  $\Pi_1 \cap \partial O$ includes a continuous subcurve $C \subset \Pi_1$. Pick $x\in\mathring{C}$ and $\epsilon >0$ so that $B(x, \epsilon) \cap R \subset \overline{O}$. Let $R' = R \backslash \mathring{B}(x, \epsilon)$, which effectively absorbs the obstacle into the boundary, see Figure \ref{fig:thru x} (a). The region $R'$ is closed, so there is a well-defined shortest $(u,v)$-path $\Pi_2 \subset R'$.  The path $\Pi_2$ is guardable in $R'$, and therefore it is guardable by $p_2$ in $R$, provided that $p_1$  guards $\Pi_1$. Indeed, any shorter path in $R$ must go through the point $x$, so Lemma \ref{lemma:guarding} guarantees that an evader using such a path will be caught by $p_1$. Finally, we note that  $\Pi_1,\Pi_2$ have distinct homotopy types  because $O \subset R[\Pi_1, \Pi_2]$.

Next, we consider the case where $\Pi_1 \cap \bar{O}$ contains no continuous curves: we just focus on the first point $x \in \Pi_1 \cap \bar{O}$ encountered as we move from $u$ to $v$. Locally around $x$, the path $\Pi_1$ and the boundary $\partial O$ separate $R$ into two external regions (outside of $\Pi$ and inside $\partial O )$ and two internal regions, see Figure \ref{fig:thru x} (b). 
 The shortest path $\Pi_1$ does not self-intersect, so locally near $x$, this path consists of two line segments meeting at $x$, create an interior angle smaller than $2\pi$. Therefore, at least one of the two interior angles made by $\Pi_1$ and the obstacle tangent line(s) at $x$ is strictly less than $\pi$.   This local region is where we will remove our triangular open set.

 Without loss of generality, suppose that the subpath $\Pi_1(x,v)$ helps to bound this local region. 
Take points $y \in\Pi_1(x,v)$ and $z \in \partial O$ (traveling counterclockwise from $x$) such that $0 < d_{\Pi_1}(x,y) = d_{\partial O}(x,z) < \dmin/2$, and the angle $\angle yxz < \pi$.   Let $A\subset B(x, \dmin)$ be the closed region with endpoints $(x,y,z)$, where the third curve is the unique shortest $(y,z)$-path $\Gamma$, see Figure \ref{fig:thru x} (c). 
The bound on the absolute curvature $\kappa_{\max}$ allows us to choose our $y,z$ so that the region $A$ is essentially triangular. 
Since $\dmin$ is the minimum distance between obstacles, $A$ is obstacle-free, so $\Gamma$ is a straight line segment.

  We remove the relatively open set $A' = A \backslash \Gamma$ from our domain.  We then find the shortest $(u,v)$-path $\Pi_2$ in the closed set $R= R \backslash A'$. We claim that $p_2$ can guard $\Pi_2$ in $R$, provided that $p_1$ guards $\Pi_1$. As in the previous case, the shorter paths that go through $x$ are not available to the evader. Therefore, we must show that any path in $R$ that visits $A'$ is longer than $\Pi_2$. Such a path $\Lambda$ must enter and leave $A'$ through $\Gamma$, say at points $a,b$, see Figure \ref{fig:thru x} (d). However, the subpath  $\Lambda(a,b)$ can be replaced with the unique shortest path $\Gamma(a,b)$ without changing the homotopy type, a contradiction. 
Once again,  $\Pi_1,\Pi_2$ have distinct homotopy types  because $O \subset R[\Pi_1, \Pi_2]$.
\end{proof}

We refer to the paths $\Pi_1, \Pi_2$ from Lemma \ref{lem:next shortest path} as a \style{guardable pair}. Provided that the shortest $(u,v)$-path $\Pi_1$ is guarded, the ``second shortest $(u,v)$-path'' $\Pi_2$ can also be guarded. The following corollary is a  variation of the lemma.

\begin{corollary}
\label{cor:next shortest path}
Let $\Pi_1, \Pi_2$ be  $(u,v)$-paths that are guarded by $p_1, p_2$, respectively. Suppose that for $i=1,2$,  the path $\Pi_i$  touches an obstacle $O_i$, where $O_1 \neq O_2$.  
Then we can find a path $\Pi_3$ with the following properties: (a) the homotopy type of $\Pi_3$ is different than the homotopy types of $\Pi_1$ and $\Pi_2$, and in particular, $O_i \in R[\Pi_i, \Pi_3]$ for $i=1,2$; and (b) $\Pi_3$ is guardable by $p_3$, provided that $\Pi_1,\Pi_2$ remain guarded by $p_1, p_2$. 
\end{corollary}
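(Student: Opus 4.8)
The plan is to adapt the construction from Lemma~\ref{lem:next shortest path}, applying its local surgery simultaneously at a touching point of each guarded path. Let $x_1 \in \Pi_1 \cap \overline{O_1}$ and $x_2 \in \Pi_2 \cap \overline{O_2}$ be touching points, chosen as in the proof of Lemma~\ref{lem:next shortest path} (the first point of contact encountered while traversing each path from $u$ to $v$). Since $x_1 \in \partial O_1$, $x_2 \in \partial O_2$ and $O_1 \neq O_2$, the minimum separation gives $d(x_1,x_2) \geq d(O_1,O_2) > \dmin$, so the two surgeries can be carried out in disjoint neighborhoods. Near each $x_i$ I remove the relatively open set $A_i'$ exactly as in Lemma~\ref{lem:next shortest path}: a small open ball when $\Pi_i \cap \partial O_i$ contains a subcurve, or a small open triangular region bounded by subpaths of $\Pi_i$, $\partial O_i$, and a straight segment $\Gamma_i$ otherwise. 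Shrinking the removed regions if necessary, I ensure $A_1' \cap A_2' = \emptyset$. I then let $\Pi_3$ be a shortest $(u,v)$-path in the closed region $R' = R \backslash (A_1' \cup A_2')$; this exists because $R'$ is closed.

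For part~(a), the removal of $A_i'$ eliminates precisely the shortcut that would let a path round $O_i$ on the side of $\Pi_i$, so by the same local analysis as in Lemma~\ref{lem:next shortest path} the path $\Pi_3$ is forced to the far side of each obstacle. Consequently $O_1 \subset R[\Pi_1,\Pi_3]$ and $O_2 \subset R[\Pi_2,\Pi_3]$, which in particular makes the homotopy type of $\Pi_3$ distinct from those of both $\Pi_1$ and $\Pi_2$.

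For part~(b), note first that $\Pi_3$, being a shortest path in $R'$, is minimal in $R'$ and hence guardable there by Lemma~\ref{lemma:guarding}. To upgrade this to guardability by $p_3$ in the full region $R$, I would show that every $(u,v)$-path in $R$ strictly shorter than $\Pi_3$ is intercepted by one of the first two pursuers. Let $\Lambda$ be such a path. Since $\Pi_3$ is a shortest path in $R'$, the path $\Lambda$ cannot lie entirely in $R'$; hence it either passes through one of the touching points $x_1, x_2$, or it visits $A_1'$ or $A_2'$. If $\Lambda$ passes through $x_i$, then because $x_i \in \Pi_i$ and $\Pi_i$ is guarded, Lemma~\ref{lemma:guarding}(c) lets $p_i$ capture an evader following $\Lambda$. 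If instead $\Lambda$ visits $A_i'$, it must enter and leave through $\Gamma_i$ at points $a,b$, and replacing $\Lambda(a,b)$ by the straight segment $\Gamma_i(a,b)$ yields a path in $R'$ of the same homotopy type that is no longer than $\Lambda$, contradicting that $\Pi_3$ is shortest in $R'$. Thus no uncaught evader path beats $\Pi_3$, and $p_3$ may guard it, provided $p_1, p_2$ continue to guard $\Pi_1, \Pi_2$.

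The main obstacle is the guardability argument in part~(b): with two removed regions and two guards, I must confirm that the dichotomy ``passes through $x_i$ or short-cuts across $\Gamma_i$'' is exhaustive for every shorter path, including one that interacts with both surgeries at once, and that the short-cutting operation never inadvertently produces a strictly shorter competitor to $\Pi_3$ in $R'$. Keeping the two neighborhoods disjoint (using $d(x_1,x_2) > \dmin$) and small enough that each $\Gamma_i$ is a straight, obstacle-free segment --- which relies on both $\kappa_{\max}$ and $\dmin$, exactly as in Lemma~\ref{lem:next shortest path} --- is what makes the two local arguments independent and lets them be combined cleanly.
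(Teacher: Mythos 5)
Your proposal takes exactly the same route as the paper, which simply says to repeat the surgery of Lemma \ref{lem:next shortest path} at a touching point of each of $\Pi_1$ and $\Pi_2$ and take the shortest $(u,v)$-path in $R$ with both open sets removed. In fact you supply more detail than the paper does (the disjointness of the two removed neighborhoods via $d(x_1,x_2) > \dmin$, and the case analysis showing every shorter competitor is either intercepted at some $x_i$ or can be short-cut across $\Gamma_i$), and these details are correct.
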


\begin{proof}
The proof is similar to the proof of Lemma \ref{lem:next shortest path}. This time, we must remove an open set $A$ near $x \in \Pi_1 \cap O_1$ and an open set $B'$ near $y \in \Pi_2 \cap O_2$. We then find $\Pi_3$ in $R \backslash ( A' \cup B').$
\end{proof}

This concludes our search for guardable paths that touch obstacles. The next section lays out the three-pursuer strategy for capturing the evader in a two-dimensional domain.

%
%
%

\setcounter{figure}{0}
\section{Shortest Path Strategy: Proof of Theorem \ref{thm:3pursuers}}
\label{sec:proof}

In this section, we prove Theorem \ref{thm:3pursuers}:  three pursuers can capture an evader in a two-dimensional compact domain with piecewise analytic boundary. We adapt the  the shortest path strategy of Bhaudaria et al.~\cite{bhadauria+klein+isler+suri} to our more general topological setting. In particular, our  guardable path lemmas  from Section \ref{sec:guarding} supplant their use of polygon vertices to  find successive paths. Their algorithm guarantees success by reducing the number of polygon vertices in the evader territory. Instead, we keep track of the threat level of obstacles to argue that  the evader becomes trapped in a simply connected region.

Our pursuit proceeds in rounds. At the start of a round, at most two pursuers guard paths. The third pursuer moves to guard another path with the goal of eliminating obstacles from the evader territory. This third path will either be a shortest path, or it will create a guardable pair with the currently guarded path(s). Once this third path is guarded, the evader is trapped in a smaller region, which releases one of the other pursuers to continue the process. 
This continues until the evader is trapped in a simply connected region, where the free pursuer can capture the evader by Lemma \ref{lemma:guardable pair}.

We start by showing that the boundary of the evader territory is always piecewise analytic, after recalling two definitions. First, the endpoints of a line segment touching the boundary $\partial \Domain$ are called \style{switch points}.  Second, a point $x$ is an \style{accumulation point} (or limit point) of a set $S$ when any open set containing $x$  contains an infinite number of elements in $S$. We make use of the following result about the interaction of a geodesic with the boundary of the domain.

\begin{theorem}
[Albrecht and Berg \cite{albrecht+berg}]
 \label{thm: no accumulation point}
 If $M$ is a 2-dimensional analytic manifold with boundary embedded in $\euc^2$, and $\gamma$ is a geodesic in $M$, then the switch points on $\gamma$ have no accumulation points.
\end{theorem}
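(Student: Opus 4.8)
The plan is to argue locally and extract a contradiction from the analyticity of $\partial M$. Suppose, for contradiction, that the switch points on $\gamma$ accumulate at a point $p$. Since switch points lie on $\partial M$, we have $p \in \partial M$, and because distinct boundary components are separated by distance $\dmin > 0$, infinitely many switch points accumulating at $p$ must all belong to a single analytic boundary component $\Gamma$. I would parametrize $\Gamma$ by arc length near $p$ as an analytic curve $\beta(s)$ with $\beta(0)=p$; its signed curvature $\kappa(s)$ is then a real-analytic function of $s$ on a neighborhood of $0$.

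First I would pin down the local structure of the geodesic at a switch point. A geodesic in $M$ is a concatenation of open straight segments lying in the interior $\mathring{M}$ and arcs running along $\partial M$, and the switch points are exactly the junctions between these two kinds of pieces. A standard first-variation argument shows that a locally shortest path must leave and meet a smooth boundary tangentially: if a straight segment met $\Gamma$ transversally at a switch point, one could shortcut across the resulting wedge and strictly shorten the path. Hence every straight segment of $\gamma$ near $p$ is tangent to $\beta$ at each of its switch-point endpoints, and the tangent direction of $\gamma$ varies continuously across each switch point.

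Next I would record the convexity constraint that governs boundary following. Orienting $\Gamma$ so that $M$ lies on one side, a maximal arc of $\gamma$ running along $\Gamma$ can be locally shortest only where $\Gamma$ is convex as seen from $M$, i.e.\ where $\kappa(s)$ has a fixed sign; along a sub-arc where $\Gamma$ instead curves into the free region, $\gamma$ must take a straight tangent chord. Consequently, two consecutive boundary-following arcs of $\gamma$ cannot belong to the same convex bulge (once $\gamma$ leaves a convex arc tangentially it travels away from it), so between them there must lie a point of $\Gamma$ where the sign of $\kappa$ changes, that is, an inflection point with $\kappa(s)=0$. This is the step where I expect the main difficulty: making precise that each alternation between hugging $\Gamma$ and cutting across it forces a genuine interior zero of $\kappa$ between the corresponding switch points, and simultaneously ruling out the degenerate ways a geodesic can graze $\Gamma$ tangentially at an isolated point without following it (here one uses that a fixed line is tangent to an analytic curve only at isolated points, unless the curve contains that segment).

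Granting this, the contradiction is immediate. An accumulation of switch points at $p$ produces infinitely many distinct boundary-following arcs whose separating inflection parameters $s$ with $\kappa(s)=0$ also accumulate at $0$. Since $\kappa$ is real-analytic, its zero set near $0$ is either discrete or all of a neighborhood. In the discrete case the zeros cannot accumulate, a contradiction. In the remaining case $\kappa \equiv 0$ near $p$, so $\Gamma$ is a straight segment there; but a straight boundary offers no convex bulge to hug, so $\gamma$ cannot alternate between this segment and interior chords and there are no switch points near $p$, again a contradiction. Either way the assumption fails, so the switch points on $\gamma$ have no accumulation point.
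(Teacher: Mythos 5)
First, a framing note: the paper does not prove Theorem \ref{thm: no accumulation point} at all --- it is imported verbatim from Albrecht and Berg \cite{albrecht+berg} and used as a black box --- so there is no in-paper argument to compare yours against; I can only judge the proposal on its own terms. Your skeleton (localize to one analytic boundary branch through the accumulation point, establish tangency of $\gamma$ at switch points, extract a zero of the signed curvature $\kappa(s)$ between consecutive switch points, then apply the identity theorem to the real-analytic function $\kappa$) is the right one and can be completed. But the proof has a genuine hole at exactly the step you flag as ``the main difficulty'': you never actually produce the zero of $\kappa$, and what you assert --- that each alternation forces a point where ``the sign of $\kappa$ changes'' --- is both unproved and slightly stronger than what is true ($\kappa$ need not change sign; it need only vanish). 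The clean way to close it is via the turning angle. Let $\beta(s_i)$ and $\beta(s_{i+1})$ be the two endpoints of a maximal straight segment of $\gamma$, both lying on the same local branch with $s_{i+1}-s_i$ small. Your shortcut argument shows $\gamma$ is $C^1$ at smooth boundary points, so this segment lies on the tangent line of $\beta$ at \emph{both} $s_i$ and $s_{i+1}$; hence the tangent angle $\theta$ of $\beta$ satisfies $\theta(s_{i+1})-\theta(s_i)\in\pi\zed$, and since $|\theta(s_{i+1})-\theta(s_i)|\leq \kappa_{\max}\,|s_{i+1}-s_i|<\pi$ eventually, we get $\int_{s_i}^{s_{i+1}}\kappa\,ds=0$, whence $\kappa(\sigma_i)=0$ for some $\sigma_i\in[s_i,s_{i+1}]$. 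Infinitely many switch points accumulating at $p$ yield infinitely many such maximal segments with $\sigma_i\to 0$, so the zeros of the analytic function $\kappa$ accumulate, forcing $\kappa\equiv 0$ near $p$; a geodesic meeting a straight wall tangentially is locally a single segment with no switch points, a contradiction. Note this computation needs no case split between boundary-hugging arcs and isolated tangential grazes --- it only uses bitangency of each maximal segment --- so it also disposes of the degeneracies you were worried about.

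A second, smaller issue: your opening assertion that ``a geodesic in $M$ is a concatenation of open straight segments and arcs running along $\partial M$'' quietly presupposes a local finiteness that is close to the conclusion being proved. A priori the contact set $\gamma^{-1}(\partial M)$ is just a closed set and can be Cantor-like --- this is precisely the pathology Albrecht and Berg construct for $C^\infty$ boundaries, as the paper itself remarks. The safe starting point is only that the complement of the contact set is a countable union of open parameter intervals, on each of which $\gamma$ is a straight segment, and that the switch points are the endpoints of these maximal segments; the turning-angle argument above uses nothing more. You should also say a word justifying that, for the geodesics at hand, all but finitely many switch points near $p$ are joined to their successors by segments spanning short arcs of a single boundary branch (for the finite-length shortest paths used in this paper, a visit to a small ball around $p$ from outside a larger ball costs a definite amount of length, so there are only finitely many non-local returns). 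With those repairs the argument is correct.
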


We  restrict ourselves to analytic boundary, instead of smooth ($C^2$, or even $C^{\infty}$) boundary, to avoid some potentially pathological behavior of geodesics. For example, Albrecht and Berg \cite{albrecht+berg} construct a geodesic in $C^{\infty}$ environment,  that achieves a Cantor set of positive measure as the accumulation of switch points. This unusual geometry hampers our ability to confine the evader in a well-defined  connected component.
Theorem \ref{thm: no accumulation point} ensures that our new evader territory will be bounded by piecewise analytic curves.  

\begin{lemma}
 \label{lem: shortest path is nice}
Let $\Domain$ be a compact domain with piecewise analytic boundary. If $\Pi$ is a shortest path in $\Domain$, then $\Pi$ is piecewise analytic. Furthermore, if $\Domain\backslash\Pi$ is disconnected, then it contains finitely many connected components, and the boundary of each connected component is piecewise analytic.
\end{lemma}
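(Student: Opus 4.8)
The plan is to prove the statement in two halves: first that a shortest path $\Pi$ is piecewise analytic, and second that if $\Domain \setminus \Pi$ is disconnected then it has finitely many components, each with piecewise analytic boundary. For the first half, I would decompose $\Pi$ into \emph{free segments} (maximal subpaths lying in the interior $\mathring{\Domain}$) and \emph{boundary-following arcs} (maximal subpaths traveling along some $\partial O_i$). A standard first-variation argument shows that any interior portion of a shortest path is a straight line segment: if $\Pi$ did not travel in a straight line where it is free of the boundary, we could shortcut it, contradicting minimality. Hence every free segment is analytic (indeed linear). Where $\Pi$ follows the boundary, it coincides with a piece of $\partial O_i$, which is analytic by hypothesis on $\Domain$. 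The content of the first half is therefore to argue that these two types of pieces \emph{alternate only finitely often}, so that $\Pi$ is a finite concatenation of analytic arcs.

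This finiteness is precisely where Theorem~\ref{thm: no accumulation point} of Albrecht and Berg does the heavy lifting, and I expect it to be the main obstacle. The transitions between free segments and boundary arcs occur exactly at the \emph{switch points} of $\Pi$. If there were infinitely many such transitions, the switch points would (by compactness of $\Domain$, hence of $\Pi$'s image) have an accumulation point, contradicting Theorem~\ref{thm: no accumulation point}. I would need to be careful that a shortest path in $\Domain$ really is a geodesic in the manifold-with-boundary sense required by that theorem, and that switch points as I have defined them (endpoints of maximal free segments meeting $\partial\Domain$) coincide with the switch points in their statement. Granting this, $\Pi$ has finitely many switch points, hence finitely many analytic pieces, so $\Pi$ is piecewise analytic.

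For the second half, I would first observe that removing a simple piecewise analytic path $\Pi$ from the compact domain $\Domain$ can produce only finitely many components. The idea is that each component's boundary is assembled from subarcs of $\Pi$ and subarcs of $\partial\Domain$, and both $\Pi$ and $\partial\Domain$ have only finitely many singular (switch) points; the components are determined by how $\Pi$ subdivides $\Domain$, and a simple arc with finitely many boundary contacts can cut $\Domain$ into only finitely many pieces. More carefully, the points where $\Pi$ touches $\partial\Domain$ are finite in number by the first half, and between consecutive such contacts $\Pi$ runs through the interior, each such interior crossing contributing at most one additional component; since there are finitely many contact points there are finitely many components.

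Finally, I would verify that each component's boundary is piecewise analytic. The boundary of any component $C$ is a union of arcs, each of which is either a subarc of $\Pi$ or a subarc of some $\partial O_i$. Subarcs of $\Pi$ are piecewise analytic by the first half, and subarcs of $\partial\Domain$ are piecewise analytic by the hypothesis on $\Domain$. A single component's boundary is a finite concatenation of such arcs — finite because both the switch points of $\Pi$ and the singular points of $\partial\Domain$ are finite in number, and the corners where $\Pi$ meets $\partial\Domain$ are exactly the finitely many contact points identified above. Hence $\partial C$ is a finite union of analytic arcs with finitely many corners, i.e.\ piecewise analytic, completing the proof.
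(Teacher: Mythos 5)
Your proposal is correct and follows essentially the same route as the paper: invoke the Albrecht--Berg theorem to conclude the switch points cannot accumulate (hence are finite), decompose $\Pi$ at those points into interior line segments and analytic boundary arcs, and observe that each component of $\Domain\backslash\Pi$ has boundary contained in $\Pi\cup\partial\Domain$. If anything you are more careful than the paper on the second half --- the paper simply asserts the finiteness of the components, whereas you sketch why the finitely many boundary contacts of $\Pi$ bound the number of pieces --- and your caveat about matching the notion of ``switch point'' to the one in Albrecht--Berg is a legitimate point that the paper also leaves implicit.
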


\begin{proof}
Let  $B \subset \partial(\Pi\backslash\partial\Domain)$ be the set of  \style{switch points}. We claim that $B$ is finite. Otherwise, there must be an accumulation point of $B$ since $l(\Pi)$ is finite,  contradicting Theorem \ref{thm: no accumulation point}. Now we can use the finite set $B$ as endpoints to partition $\Pi$ so that each subcurve is either in the boundary $\partial\Domain$, or in the interior $\mathring{\Domain}$.  Since any shortest path in the interior $\mathring{\Domain}$ must be a line segment, the path $\Pi$ is piecewise analytic.
For each connected component of $\Domain\backslash\Pi$, its boundary is a subset of $\Pi \cup \partial\Domain$, hence is piecewise analytic.  
\end{proof}

In order to prove Theorem \ref{thm:3pursuers}, we will show that our pursuit succeeds in finite time. To aid in this effort, we assign  a threat level to each of the $k$ obstacles in the original domain. These threat levels will reliably decrease during pursuit. An obstacle is in one of three states: dangerous, safe, or removed. A \style{removed} obstacle  lies outside the evader territory.
A \style{safe} obstacle lies in the evader territory and touches a currently guarded path. This obstacle is not a threat because the evader cannot circle around the object without being captured. The remaining  obstacles are \style{dangerous}. 
Finally, we say that the evader territory is dangerous if it contains at least one dangerous obstacle.

At the start of pursuit, all obstacles are dangerous.  So long as there are still dangerous obstacles, a round consists of taking control of a guardable path. This effort succeeds in a finite number of moves by Lemma \ref{lemma:guarding}.   We will show that after at most two rounds, either a dangerous obstacle transitions to safe/removed, or a safe obstacle transitions to removed.     This is our notion of progress: after at most $2k$ rounds, the evader territory is not dangerous. 
From here forward, we focus on the transition of the threat levels of obstacles.

In general, our evader territory will be bounded by part of the domain boundary $\partial \Domain$ and by at most two guarded paths $\Pi_1, \Pi_2$. At the end of a round, the evader territory will be updated, bounded in part by updated paths $\Pi_1',\Pi_2'$. If these guarded paths  intersect or share subpaths, then the evader is actually trapped in a smaller region by Lemma \ref{lemma:guarding}. When this is the case, we advance the endpoint(s) of our paths so that these are the only point(s) shared by our paths. This obviates the need to discuss degenerate cases.

The first round is an initialization round, so all obstacles might still be dangerous when this round completes. However, we will be able to neutralize at least one obstacle in the subsequent round. To kick off the first round, we pick points $u, v \in \partial \Domain$, chosen so that they divide the outer boundary into two curves $\Delta_1, \Delta_2$ of equal length, see Figure \ref{fig:round1}(a). Let $\Pi_1$ be a shortest $(u,v)$-path; if there are multiple shortest paths (in which case each touches an obstacle), then we pick one arbitrarily. 
Using Lemma \ref{lemma:guarding}, $p_1$ moves to guard $\Pi_1$. The round ends when $p_1$ has attained guarding position, trapping the evader in a subdomain that is bounded by $\Pi_1$ and one of $\Delta_1, \Delta_2$. The evader could be trapped in a smaller pocket region between $\Pi_1$ and a subcurve $\Delta_3$ of an obstacle $O \subset \Domain$, see Figure \ref{fig:round1}(b). In the latter case, the obstacle $O$ is marked as removed and we treat $\Delta_3$ as the 
new 
outer boundary. After updating the evader territory $R$, any obstacle $O \not\subset R$ is marked as removed. Any obstacle $O \subset R$ that touches $\Pi_1$ or $\Pi_2$ is marked as safe. 

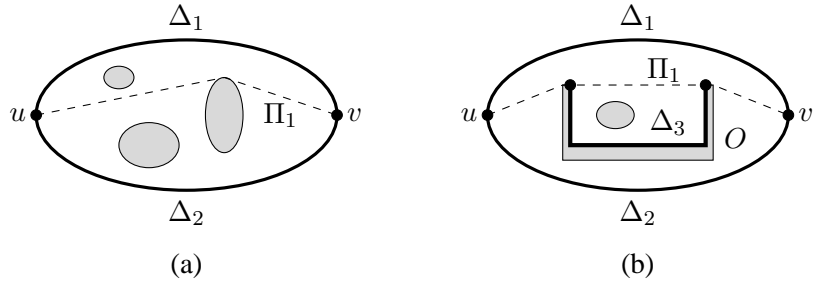
\begin{figure}
\begin{center}
\begin{tikzpicture}

\draw[very thick] (0,0) ellipse (2 and 1);

\draw[fill=gray!30]  (.5,0) ellipse (.25 and .5);

\draw[fill=gray!30]  (-.9,.5) ellipse (.2 and .15);

\draw[fill=gray!30]  (-.5,-.4) ellipse (.4 and .3);

\draw[dashed] (-2,0) -- (.5, .5)  -- (2,0);

\node[left] at (-2,0) {$u$};
\node[right] at (2,0) {$v$};

\draw[fill] (-2,0) circle (2pt);
\draw[fill] (2,0) circle (2pt);

\node at (90:1.3) {$\Delta_1$};

\node at (270:1.3) {$\Delta_2$};

\node at (1.25,0) {$\Pi_1$};

\node at (0, -2) {(a)};

\begin{scope}[shift={(6,0)}]

\draw[very thick] (0,0) ellipse (2 and 1);

\draw[fill=gray!30]  (1,.4) -- (1, -.6) -- (-1, -.6) -- (-1, .4) -- (-.9, .4) -- (-.9, -.4) -- (.9,-.4) -- (.9, .4) -- cycle;

\draw[fill=gray!30]  (-.3,0) ellipse (.25 and .18);

\draw[dashed] (-2,0) -- (-1, .4) -- (1, .4) -- (2,0);

\node[left] at (-2,0) {$u$};
\node[right] at (2,0) {$v$};

\draw[fill] (-2,0) circle (2pt);
\draw[fill] (2,0) circle (2pt);

\draw[fill] (-.9,.4) circle (2pt);
\draw[fill] (.9,.4) circle (2pt);

\draw[ultra thick] (-.9,.4) -- (-.9, -.4) -- (.9,-.4) -- (.9, .4);

\node at (90:1.3) {$\Delta_1$};

\node at (270:1.3) {$\Delta_2$};

\node at (.4,-.1) {$\Delta_3$};

\node at (.35,.6) {$\Pi_1$};
\node at (1.3,-.3) {$O$};

\node at (0, -2) {(b)};

\end{scope}

\end{tikzpicture}

\end{center}

\caption{The shortest $(u,v)$-path $\Pi_1$ guarded in round one. (a) The path $\Pi_1$ partitions the outer boundary to subcurves $\Delta_1, \Delta_2$ of equal length. (b) The evader may be trapped in a pocket between the path $\Pi_1$ and the boundary subcurve $\Delta_3$ of obstacle $O$. }
\label{fig:round1}

\end{figure}

For the remainder of the game, the boundary of the evader territory is one of the following types.
\begin{itemize}
\item Type $0$ region: A region containing no dangerous obstacles.
\item Type $1$ region: A dangerous three-sided region bounded by a $(u,v)$-shortest path $\Pi_1$, a $(u,w)$-shortest path $\Pi_2$ and a $(v,w)$-path $\Delta \subset \partial D$. No obstacle touches both $\Pi_1, \Pi_2$.
\item Type $1'$ region: A dangerous two-sided region bounded by a $(u,v)$-shortest path $\Pi_1$ and a $(u,v)$-path $\Delta \subset \partial \Domain$.  We  treat this as  a special case of the previous type, where $\Pi_2$ consists of the single point $w=u$. This point is on $\Pi_1$, so it is guarded by $p_1$.
\item Type 2 region: A dangerous two-sided region bounded by  $(u,v)$-paths $\Pi_1, \Pi_2$, each of which touches an obstacle in the evader territory.   The path $\Pi_1$ is a shortest $(u,v)$-path in this region. The path $\Pi_2$ might also be a shortest $(u,v)$-path, or it could be a ``second shortest path,'' meaning that it is a shortest $(u,v)$-path among the set of $(u,v)$-paths that are not homotopic to $\Pi_1$. No obstacle touches both $\Pi_1, \Pi_2$.
\item Type 3 region: a dangerous 4-sided region bounded by a $(u,v)$-shortest path $\Pi_1$, a $(w,x)$-shortest path $\Pi_2$, a $(v,w)$-path $\Delta_1$ from the boundary and a $(u,x)$-path $\Delta_2$ from the boundary. These vertices are arranged so that they are ordered clockwise as $u,v,w,x$.  No obstacle touches both $\Pi_1, \Pi_2$.
\end{itemize}
For example, after the initialization round, the evader territory is a type $1'$ region, bounded by a guarded path and part of the boundary $\partial \Domain.$ Finally, we emphasize that Lemma
\ref{lem: shortest path is nice} ensures that the boundary of the evader region is always piecewise analytic, since it consists of sub-curves of the piecewise analytic boundary along with one or more shortest paths.

We now describe the different types of rounds. In regions of  type 1, $1'$ and 2, we will always transition at least one obstacle. At the end of such a round, the evader could now be trapped in a region of any type. Type 3 rounds are slightly different. Our primary goal is to trap the evader in  a type 1 region, where we will surely make progress in the subsequent next round. However,  it is possible to transition an object via a type 3 move (just as in the initialization round). In this case, we make immediate progress, and the evader could then be trapped in a region of any type.


First we consider type 1 regions. This also handles  type $1'$ regions as a special case. 
We use the following lemma to identify a point $x \in \Delta$ and a shortest $(u,x)$-path to guard during this round. 

\begin{lemma}
\label{lem: multiple shortest paths}
 Let shortest paths $\Pi_0(u,v),\Pi_1(u,w)$ and boundary path $\Delta(v,w)$ bound a type $1$ environment $R$. If $R$ contains obstacles, then there exists a point $x\in\Delta$ such that there are multiple shortest $(u,x)$-paths in $R$, each of which touches at least one obstacle.
\end{lemma}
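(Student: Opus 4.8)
The plan is to sweep a point $x$ along the boundary arc $\Delta$ from $v$ to $w$, track the homotopy class of a shortest $(u,x)$-path in $R$, and show that this class is forced to change; since a shortest path of varying homotopy type can change class only where two shortest paths of different types coexist, this produces the desired $x$. To compare homotopy classes of paths whose terminal point moves along $\Delta$, I fix a reference and assign to each $(u,x)$-path $\gamma$ a \emph{signature} $\sigma(\gamma)\in\pi_1(R,u)$, namely the class of the loop obtained by concatenating $\gamma$, the boundary subpath of $\Delta$ from $x$ back to $v$, and $\Pi_0$ traversed from $v$ to $u$. Two $(u,x)$-paths are homotopic in $R$ if and only if they have the same signature. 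Because $R$ contains an obstacle, the outer boundary loop $\Pi_0\cdot\Delta\cdot\Pi_1^{-1}$ has nonzero winding number about that obstacle and is non-contractible; hence $\sigma(\Pi_0)=e$ (taking $x=v$) while $\sigma(\Pi_1)\neq e$ (taking $x=w$), so the endpoint signatures disagree.

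Assume, toward a contradiction, that the lemma fails, so no $x\in\Delta$ admits two shortest $(u,x)$-paths of different homotopy type; in particular every $x$, including the endpoints $v,w$ (where $\Pi_0,\Pi_1$ are shortest), has all its shortest paths mutually homotopic. This makes $f:\Delta\to\pi_1(R,u)$, $f(x)=\sigma\big(\Pi(u,x)\big)$, a well-defined function with $f(v)=e\neq f(w)$. I would then show $f$ is locally constant. Given $x_n\to x$, a standard compactness (Arzel\`a--Ascoli) argument—valid since $\Domain$ is compact and shortest paths have uniformly bounded length—yields a subsequence of the shortest $(u,x_n)$-paths converging uniformly to a shortest $(u,x)$-path; because distinct obstacle boundaries are separated by $\dmin$, a thin tube about the limit path is obstacle-free, so all sufficiently nearby shortest paths cobound an obstacle-free strip with it and are therefore homotopic to it. Thus $f(x_n)=f(x)$ eventually, and since $\pi_1(R,u)$ is discrete, $f$ is continuous, hence constant on the connected arc $\Delta$—contradicting $f(v)\neq f(w)$.

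Consequently there is a point $x^*\in\Delta$ admitting two shortest $(u,x^*)$-paths of distinct homotopy type, so at least two homotopy classes of shortest $(u,x^*)$-path occur. To conclude that \emph{every} shortest $(u,x^*)$-path touches an obstacle, take any such path $\Lambda$ and pair it with a shortest $(u,x^*)$-path $\Lambda'$ lying in a different class (one exists, since at least two classes occur). Let $a$ be the last point, measured from $u$, at which $\Lambda$ and $\Lambda'$ agree, and $b$ the first point after $a$ at which they agree again. The subpaths $\Lambda(a,b)$ and $\Lambda'(a,b)$ are shortest $(a,b)$-paths meeting only at $\{a,b\}$ and of different homotopy type, so the region they bound must contain an obstacle; Lemma \ref{lem: touching lemma} then forces each subpath, and hence each of $\Lambda,\Lambda'$, to touch an obstacle. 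This yields the point $x$ and the family of obstacle-touching shortest $(u,x)$-paths claimed by the lemma.

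The main obstacle is the local-constancy step, specifically the claim that homotopy type is preserved under uniform limits of shortest paths. This is exactly where compactness of $\Domain$ and the separation $\dmin$ enter: they guarantee both the existence of limiting shortest paths and that nearby shortest paths are homotopic to them. Once this continuity is established, the intermediate-value/connectedness argument and the final application of Lemma \ref{lem: touching lemma} are routine.
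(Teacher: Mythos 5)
Your overall architecture parallels the paper's: both proofs sweep $x$ along $\Delta$, attach to the shortest $(u,x)$-path a discrete invariant that must change between $v$ and $w$ (you use the full homotopy class via the signature $\sigma$; the paper uses the number $n(t)$ of obstacles trapped between $\Pi_0$ and the shortest $(u,\Delta(t))$-path), locate a discontinuity, and finish with Lemma \ref{lem: touching lemma}. Where you diverge is at the discontinuity itself. The paper avoids compactness arguments on path space: at $s=\inf\{t: n(t)>0\}$ it compares the length of the shortest path $\Gamma$ in the ``old'' homotopy class against $\lim\bigl(l(\Pi_t)+l(\Delta(t,s))\bigr)=l(\Pi_s)$ and concludes that two distinct classes realize the minimal length at $\Delta(s)$. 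You instead prove there can be no discontinuity at all, via Arzel\`a--Ascoli plus the claim that uniformly close paths are homotopic. Your endgame is also slightly sharper: you extract two shortest paths of \emph{different homotopy type} and localize to a bigon before invoking the touching lemma, whereas the paper applies it to the full paths directly.

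The one step that does not survive scrutiny as written is the local-constancy argument: ``a thin tube about the limit path is obstacle-free'' is false in exactly the situation this lemma is about, because the limiting shortest path will in general touch obstacle boundaries (shortest paths hug obstacles), so no tube around it avoids the obstacles; moreover $\dmin$, the separation between \emph{distinct} boundary components, is not the relevant quantity. The fact you actually need --- that two $(u,x)$-paths in $R$ that are uniformly $\epsilon$-close are homotopic in $R$ once $\epsilon$ is small enough --- is true, but for a different reason: each open obstacle $O_i$ contains a disk $B(c_i,r_i)$, so for $\epsilon<\min_i r_i$ the straight-line homotopy between the two paths misses every $c_i$, the difference loop therefore has winding number $0$ about each $c_i$ and hence about all of the connected set $\overline{O_i}$, and a loop in a planar domain with zero winding number about every complementary component is null-homotopic. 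With that substitution your argument goes through; without it, the continuity of $f$ is unsupported at precisely those $x$ whose shortest paths touch obstacles, which are the only interesting ones.
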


\begin{proof}
Parameterize  the boundary path as $\Delta: [0,1]\rightarrow R$.  We prove the lemma by contradiction. 

Suppose that for every $t\in[0,1]$, the shortest $(u,\Delta(t))$-path $\Pi_t$ is unique. Denote its length by $l(\Pi_t) = d(u,\Delta(t))$.  Define the function $n(t)$ to be the number of obstacles in  the region $R_t$ bounded by $\Pi_0,\Pi_t$ and $\Delta$. The function $n(t)$ is well-defined by the uniqueness of each $\Pi_t$. Furthermore,  $n(0)=0$, and $n(1)>0$, so there must be a jump discontinuity somewhere in $[0,1]$. Let $s=\inf\{t\in[0,1]: n(t)>0\}$. 
 
 Case 1: $n(s)>0$ where $0 < s \leq 1$. (Recall that  $n(0)=0$, so $s>0$.)  Let $\Gamma$ be a shortest $(u,\Delta(s))$-path that is of the same homotopy class as $\Pi_0$. 
 The choice of $s$ and the uniqueness of $\Pi_s$ guarantee that $l(\Gamma) > l(\Pi_s)$. Also no obstacles are contained in the region bounded by $\Pi_0,\Gamma$ and $\Delta(0,s)$. 
By the continuity of $l(\Gamma_t) = d(u,\Delta(t))$ with respect to $t$, we have
 $l(\Gamma) \leq l(\Pi_t)+l(\Delta(t,s))$, so
 $$
 l(\Gamma) \leq \lim_{t \rightarrow s^-}  \big( l(\Pi_t)+l(\Delta(t,s)) \big) = l(\Pi_s) + 0 = l(\Pi_s). 
 $$
This contradicts $l(\Gamma) > l(\Pi_s)$, so   $\Pi_s$ is not the unique shortest $(u, \Delta(s))$-path.
 
 Case 2: $n(s)=0$, where  $0 \leq s < 1$. Let $\{ s_i \}$ be an infinite sequence $s_i \rightarrow s^+$, such that $n(s_i)>0$ for all $i$. 
There are finite number of obstacles, so by taking a subsequence if necessary, we can assume that the shortest paths $\{ \Pi_{s_i} \}$ are of the same homotopy class. 
Let $\Gamma$ be the shortest $(u, \Delta(s))$-path of this homotopy class. We have
$ l(\Gamma)  \leq   l(\Pi_{s_i}) +  l(\Delta(s,s_i))$ for all $i$, and therefore
 $$
 l(\Gamma)  \leq   \lim_{i \rightarrow \infty} \big( l(\Pi_{s_i})+  l(\Delta(s,s_i)) \big)
 = l(\Pi_s),
 $$
 where the limit holds by the continuity of distances in the region. However, this contradicts the uniqueness of $\Pi_s$ which would require
 $l(\Pi_s) < l(\Gamma)$.
 
 Finally, we can conclude that are  multiple shortest $(u,x)$-paths. By Lemma \ref{lem: touching lemma},  each of these paths touches at least one obstacle.
\end{proof}

Having found the next path to guard, we now prove that we transition an object during a type 1 move.

\begin{lemma}
\label{lemma:type1region}
Suppose that the evader is trapped in a type 1 (or type $1'$) region. Then the third pursuer can guard a path that transitions an obstacle state.
\end{lemma}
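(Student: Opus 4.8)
The plan is to use Lemma \ref{lem: multiple shortest paths} to locate a boundary point carrying two shortest paths to $u$, have $p_3$ guard the one that hugs $\Pi_1$ most closely, and then argue that whichever side of this new path the evader gets confined to, some obstacle is forced to change state. Throughout, $R$ denotes the type $1$ region bounded by the $(u,v)$-shortest path $\Pi_1$ (guarded by $p_1$), the $(u,w)$-shortest path $\Pi_2$ (guarded by $p_2$), and the boundary arc $\Delta$ from $v$ to $w$.

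First I would apply Lemma \ref{lem: multiple shortest paths}: since $R$ is dangerous it contains obstacles, so there is a point $x \in \Delta$ admitting multiple shortest $(u,x)$-paths in $R$, each touching an obstacle. Among these I would take $\Gamma$ to be the one extremal on the $\Pi_1$ side, together with a second shortest $(u,x)$-path $\Gamma'$ of a different homotopy type lying on the $\Pi_2$ side of $\Gamma$; by passing to a maximal sub-region where the two paths diverge, I may arrange $\Gamma \cap \Gamma' = \{u,x\}$ so that they bound a region $R[\Gamma,\Gamma']$. By Lemma \ref{lem: touching lemma}, $\Gamma$ touches an obstacle $O \subset R[\Gamma,\Gamma']$. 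Because $R[\Gamma,\Gamma']$ lies on the $\Pi_2$ side of $\Gamma$, so does $O$; and since $\Gamma$ separates $O$ from $\Pi_1$ while $\Gamma'$ separates it from $\Pi_2$, the obstacle $O$ meets neither $\Pi_1$ nor $\Pi_2$ and is therefore \emph{dangerous} beforehand.

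Next, $p_3$ guards $\Gamma$. As $\Gamma$ is a shortest, hence minimal, $(u,x)$-path in $R$, Lemma \ref{lemma:guarding} lets $p_3$ attain the guarding position in $O(\diam(R))$ turns while $p_1$ and $p_2$ keep the evader inside $R$. Guarding $\Gamma$ cuts $R$ along $\Gamma$ into a region $R_1$ bounded by $\Pi_1,\Gamma,\Delta(v,x)$ and a region $R_2$ bounded by $\Gamma,\Pi_2,\Delta(x,w)$, and the evader is trapped in exactly one of them. If the evader is caught in $R_1$, then $O$, which lies in $R_2$, leaves the evader territory and is marked removed, and $p_2$ is freed. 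If the evader is caught in $R_2$, then $O$ remains in the evader territory and now touches the guarded path $\Gamma$, so the dangerous obstacle $O$ is marked safe, and $p_1$ is freed. Either way the guarding move transitions an obstacle, which is exactly the claim; the type $1'$ case is identical, with the degenerate path $\Pi_2=\{u\}$ guarded by $p_1$.

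The main obstacle is the bookkeeping that pins down the transition: one must be certain that the obstacle $O$ touched by $\Gamma$ lies on the far ($\Pi_2$) side of $\Gamma$ and was genuinely dangerous, so that it is forced either out of the territory (removed) or onto a guarded path (safe), rather than silently remaining safe. The choice of $\Gamma$ as extremal toward $\Pi_1$ is precisely what places $O$ on the $\Pi_2$ side, and the interiority of the divergent pair $\Gamma,\Gamma'$ is what certifies $O$ was dangerous. The one remaining technical nuisance is arranging $\Gamma \cap \Gamma' = \{u,x\}$ when the competing shortest paths share subarcs, which I would handle by restricting attention to a maximal interval on which they diverge around $O$.
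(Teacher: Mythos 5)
Your overall architecture matches the paper's: invoke Lemma \ref{lem: multiple shortest paths} to find $x \in \Delta$ with multiple shortest $(u,x)$-paths, guard one of them, and track which side of it the evader lands on. But there is a genuine gap at the step you yourself flag as the crux: the claim that the obstacle $O$ touched by $\Gamma$ ``meets neither $\Pi_1$ nor $\Pi_2$ and is therefore dangerous beforehand.'' Your separation argument shows that the open set $O$ lies in $R[\Gamma,\Gamma']$, on the $\Pi_2$ side of $\Gamma$; it does not show that $\partial O$ avoids $\Pi_1$. The paths $\Gamma$ and $\Pi_1$ both emanate from $u$ and, being shortest paths in the same region, may share an initial arc or touch at interior points; if $O$ is tangent to $\Gamma$ at such a shared point (for instance, both paths pivot around $O$ at the same point of $\partial O$ before diverging toward $x$ and $v$ respectively), then $O$ already touches the guarded path $\Pi_1$ and is safe, not dangerous. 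Choosing $\Gamma$ extremal toward $\Pi_1$ makes this \emph{more} likely, not less, since the extremal path is exactly the one that hugs $\Pi_1$ and the obstacles adjacent to it. (It is also not obvious that ``extremal on the $\Pi_1$ side'' is well defined when the competing shortest paths cross one another, but that is secondary.)

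This missing case is not cosmetic: if $O$ was already safe and the evader ends up on the same side of $\Gamma$ as $O$, your bookkeeping records no transition at all, and the lemma is not proved. The paper spends the second half of its proof on precisely this configuration (its Figure \ref{fig:type1move}(c)): when the obstacle touches both the previously guarded path and the newly guarded path and the evader stays in the region between them, the obstacle together with the two guarded paths separates that region, the evader is confined to one of the resulting pieces, and $O$ is absorbed into the boundary of the new territory, transitioning from safe to removed. You need either that argument or a proof that your extremal choice genuinely precludes $O$ from touching $\Pi_1$ --- and the latter does not hold in general. Smaller omissions: the degenerate endpoint cases $x=v$ or $x=w$ (where one must take $\Gamma \neq \Pi_1$, resp.\ $\Gamma \neq \Pi_2$) and the pocket case where the evader is trapped between $O$ and $\Gamma$ are not addressed, though these are easier to patch.
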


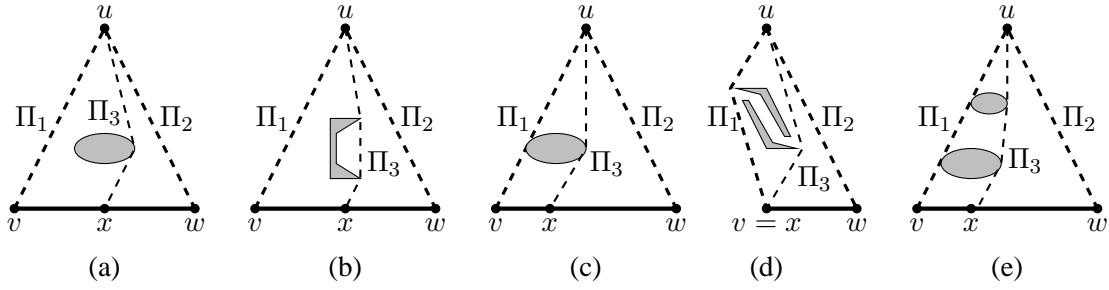
\begin{figure}
\begin{center}

\begin{tikzpicture}[scale=.8]

\draw[dashed, very thick] (-1.5,0) -- (0,3) -- (1.5,0);
\draw[ultra thick] (-1.5,0) -- (1.5,0);

\node[above] at (0,3) {$u$};
\node[below] at (-1.5,0) {$v$};
\node[below] at (1.5,0) {$w$};
\node[below] at (0,0) {$x$};

\draw[fill] (0,3) circle (2pt);
\draw[fill] (-1.5,0)  circle (2pt);
\draw[fill] (1.5,0) circle (2pt);
\draw[fill] (0,0) circle (2pt);

\draw[fill=gray!50] (0,1) ellipse (.5 and .25);


\draw[dashed, thick] (0,3) -- (.5,1) -- (0,0);

\node at (-1.2,1.5) {$\Pi_1$};
\node at (0,1.6) {$\Pi_3$};
\node at (1.2,1.5) {$\Pi_2$};

\node at (0, -1) {(a)};

\begin{scope}[shift={(4,0)}]

\draw[dashed, very thick] (-1.5,0) -- (0,3) -- (1.5,0);
\draw[ultra thick] (-1.5,0) -- (1.5,0);

\node[above] at (0,3) {$u$};
\node[below] at (-1.5,0) {$v$};
\node[below] at (1.5,0) {$w$};
\node[below] at (0,0) {$x$};

\draw[fill] (0,3) circle (2pt);
\draw[fill] (-1.5,0)  circle (2pt);
\draw[fill] (1.5,0) circle (2pt);
\draw[fill] (0,0) circle (2pt);

\draw[fill=gray!50] (.25, 1.5) -- (-.15, 1.25) -- (-.15, .75)-- (.25, .5) -- (-.25, .5) -- (-.25, 1.5) -- cycle;


\draw[dashed, thick] (0,3)  -- (.25,1.5) -- (.25,.5) -- (0,0);

\node at (-1.2,1.5) {$\Pi_1$};
\node at (0.65,.75) {$\Pi_3$};
\node at (1.2,1.5) {$\Pi_2$};


\node at (0, -1) {(b)};

\end{scope}

\begin{scope}[shift={(8,0)}]

\draw[dashed, very thick] (-1.5,0) -- (0,3) -- (1.5,0);
\draw[ultra thick] (-1.5,0) -- (1.5,0);

\node[above] at (0,3) {$u$};
\node[below] at (-1.5,0) {$v$};
\node[below] at (1.5,0) {$w$};
\node[below] at (-.6,0) {$x$};

\draw[fill] (0,3) circle (2pt);
\draw[fill] (-1.5,0)  circle (2pt);
\draw[fill] (1.5,0) circle (2pt);
\draw[fill] (-.6,0) circle (2pt);

\draw[fill=gray!50] (-.5,1) ellipse (.5 and .25);


\draw[dashed, thick] (0,3)  -- (0,1) -- (-.6,0);

\node at (-1.2,1.5) {$\Pi_1$};
\node at (0.35,.75) {$\Pi_3$};
\node at (1.2,1.5) {$\Pi_2$};


\node at (0, -1) {(c)};

\end{scope}

\begin{scope}[shift={(11,0)}]

\draw[dashed, very thick]  (0,3) -- (1.5,0);
\draw[ultra thick] (0,0) -- (1.5,0);

\node[above] at (0,3) {$u$};

\node[below] at (1.5,0) {$w$};
\node[below] at (0,0) {$v=x$};

\draw[fill] (0,3) circle (2pt);
\draw[fill] (1.5,0) circle (2pt);
\draw[fill] (0,0) circle (2pt);

\draw[fill=gray!50] (-.5,2) -- (0,2) -- (.4, 1.2) -- (.3, 1.2) -- (-.1,1.9) -- cycle;
\draw[fill=gray!50] (.5,1) -- (0,1) -- (-.4, 1.8) -- (-.3, 1.8) -- (.1,1.1) -- cycle;


\draw[dashed,very thick] (0,3) -- (-.6,2) -- (0,0);
\draw[dashed, thick] (0,3) -- (.6,1) -- (0,0);

\node at (-.7,1.5) {$\Pi_1$};
\node at (.8,.5) {$\Pi_3$};
\node at (1.2,1.5) {$\Pi_2$};

\node at (0, -1) {(d)};

\end{scope}

\begin{scope}[shift={(15,0)}]

\draw[dashed, very thick] (-1.5,0) -- (0,3) -- (1.5,0);
\draw[ultra thick] (-1.5,0) -- (1.5,0);

\node[above] at (0,3) {$u$};
\node[below] at (-1.5,0) {$v$};
\node[below] at (1.5,0) {$w$};
\node[below] at (-.6,0) {$x$};

\draw[fill] (0,3) circle (2pt);
\draw[fill] (-1.5,0)  circle (2pt);
\draw[fill] (1.5,0) circle (2pt);
\draw[fill] (-.6,0) circle (2pt);

\draw[fill=gray!50] (-.3,1.75) ellipse (.3 and .175);

\draw[fill=gray!50] (-.6,.75) ellipse (.5 and .25);

\draw[dashed, thick] (0,3)  -- (0,1.75) -- (-.1,.75) -- (-.5, 0);

\node at (-1.2,1.5) {$\Pi_1$};
\node at (.3,.8) {$\Pi_3$};
\node at (1.2,1.5) {$\Pi_2$};

\node at (0, -1) {(e)};

\end{scope}

\end{tikzpicture}

\caption{Representative examples of a type 1 move, where we transition to (a) a type 1 region, (b)  a type 1 or type $1'$ region, (c)  a type 1 or a type 3 region, (d)  a type 1 or type 2 region, (e) a type 1 or type 3 region. 
 }

\label{fig:type1move}

\end{center}
\end{figure}

\begin{proof}
By Lemma \ref{lem: multiple shortest paths}, there is some point $x \in \Delta$ with multiple shortest $(u,x)$-paths, each of which touches an obstacle.  Let $\Pi_3$ be one of these shortest $(u,x)$-paths. If $x =v$  then we take a path $\Pi_3 \neq \Pi_1$. Similarly if $x=w$ we choose $\Pi_3 \neq \Pi_2$.  When $x \notin \{ v,w \}$,   we can choose $\Pi_3$ arbitrarily from the collection of $(u,x)$-shortest paths.  Pursuer $p_3$ moves to guard $\Pi_3$, which traps the evader in either $R[\Pi_1, \Pi_3]$ or   $R[\Pi_3, \Pi_2]$. Any obstacles in the other region are marked as removed.

Without loss of generality, let $O \subset R[\Pi_1, \Pi_3]$ be an obstacle touched by $\Pi_3$, see Figure \ref{fig:type1move}(a).  
Suppose that prior to $p_3$ guarding $\Pi_3$, the object $O$ was dangerous.  If $e \in R[\Pi_3, \Pi_2]$ then $O$ transitions to removed.  If $e \in R[\Pi_1, \Pi_3]$ then $O$ transitions to safe. 
However, we may be in a more advantageous position, shown in Figure \ref{fig:type1move}(b):  the evader could be trapped in a pocket between obstacle $O$ and path $\Pi_3$. In this case, the new evader territory is type $1'$ and the obstacle $O$ is marked as removed, since it is now part of the outer boundary of the evader territory. 

Next,  suppose that $O$ was already safe, touched by $\Pi_1$. If $e \in R[\Pi_2, \Pi_3]$ after $p_3$ guards $\Pi_3$, then $O$ transitions from safe to removed. 
If the evader is trapped in a pocket region between $O$ and $\Pi_3$, we proceed as in the previous case. 
Otherwise, we have $e \in R[\Pi_1, \Pi_3]$ and the obstacle $O$
 separates $R[\Pi_1, \Pi_3]$ into disjoint regions, as shown in Figure \ref{fig:type1move}(c). The evader is trapped in one of these two subregions because both $\Pi_1, \Pi_3$ are guarded. Let $\Delta'$ be the subcurve of $\partial O$ that bounds the effective evader territory.  We update the evader territory appropriately,  bounded by $\Delta'$ and subpaths of $\Pi_1, \Pi_3$, and perhaps part of $\Delta$. The result is a region of type 1 or 3. The obstacle $O$ is marked as removed: it is now part of the boundary. This reduces the number of safe obstacles.

When $\Pi_3$ touches multiple obstacles,  each of them transitions to a lower threat level.  Figures \ref{fig:type1move}(d) and (e) show that we can also end up in a type 2 or 3 region, depending on the configuration of these obstacles and the location of the evader at the end of the round.
\end{proof}


Next, we consider a type 2 region. Such a region is bounded by   $(u,v)$-paths $\Pi_1, \Pi_2$ that form a guardable pair, where $\Pi_1,\Pi_2$ touch  safe obstacles $O_1, O_2$, respectively.  Without loss of generality,  $\Pi_1$ is a shortest $(u,v)$-path in the region, and $\Pi_2$ is either another shortest path, or a ``second shortest path'' as found in Lemma \ref{lem:next shortest path}.  (A type 1 move can lead to the first case. A type 2 move can lead to the second case, as we are about to see.)

\begin{lemma}
\label{lemma:type2region}
Suppose that the evader is trapped in a type 2 region. Then the third pursuer can guard a path that transitions an obstacle state.
\end{lemma}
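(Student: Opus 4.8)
The plan is to reduce this case directly to Corollary \ref{cor:next shortest path}, which is tailored to precisely this configuration: two guarded $(u,v)$-paths $\Pi_1,\Pi_2$ touching distinct obstacles $O_1,O_2$. First I would apply the corollary to produce a $(u,v)$-path $\Pi_3$ whose homotopy type differs from both $\Pi_1$ and $\Pi_2$, with $O_1 \subset R[\Pi_1,\Pi_3]$ and $O_2 \subset R[\Pi_2,\Pi_3]$, so that $\Pi_3$ separates the two safe obstacles, and which is guardable by $p_3$ so long as $p_1,p_2$ continue to guard $\Pi_1,\Pi_2$. Pursuer $p_3$ then moves to guard $\Pi_3$ using the Guarding Lemma (Lemma \ref{lemma:guarding}); this attains guarding position in $O(\diam(R))$ turns and, since the evader cannot cross $\Pi_3$, confines the evader to exactly one of the two sub-regions $R[\Pi_1,\Pi_3]$ or $R[\Pi_3,\Pi_2]$ into which $\Pi_1,\Pi_2,\Pi_3$ divide the territory.

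Next I would extract the obstacle transition. Without loss of generality suppose the evader is confined to $R[\Pi_1,\Pi_3]$. Then $O_2 \subset R[\Pi_2,\Pi_3]$ lies outside the updated evader territory, so $O_2$ is marked as removed; since $O_2$ was safe, this is the desired transition, and the symmetric argument removes $O_1$ if the evader is confined to $R[\Pi_3,\Pi_2]$. I would also record, exactly as in the proof of Lemma \ref{lemma:type1region}, the more favorable variants: the evader may be caught in a pocket bounded by $\Pi_3$ and a subcurve of an obstacle that $\Pi_3$ touches, in which case that obstacle becomes part of the outer boundary and is removed, and any previously dangerous obstacle met by $\Pi_3$ that remains in the territory transitions from dangerous to safe. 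By Lemma \ref{lem: shortest path is nice} the boundary of the updated territory is again piecewise analytic, so the resulting region is of a valid type for the next round.

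The hard part will be arguing that guarding $\Pi_3$ genuinely makes progress by freeing a second pursuer, rather than merely tying up all three. The subtlety is that Corollary \ref{cor:next shortest path} only guarantees $\Pi_3$ to be guardable while both $\Pi_1$ and $\Pi_2$ are guarded, because the paths shorter than $\Pi_3$ are exactly those that cut the small corners removed near $O_1$ and near $O_2$, and these are intercepted by $p_1$ and $p_2$ respectively. Once the evader is confined to $R[\Pi_1,\Pi_3]$, every such shortcut through the corner near $O_2$ lies on the far side of $\Pi_3$, outside the confined region, and so no longer threatens the minimality of $\Pi_3$; only the corner near $O_1$ remains, and it is still protected by $p_1$. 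Thus $\Pi_3$ is a shortest, and hence minimal, $(u,v)$-path within the confined region with only $\Pi_1$ still guarded. Since minimality is preserved under passing to the smaller region---distances there can only increase---the path projection onto $\Pi_3$ remains non-expansive (Lemma \ref{lemma:project}), so $p_3$ can maintain $p_3=\pi(e)$ while $p_2$ is released to continue the pursuit, completing the transition.
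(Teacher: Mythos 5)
Your proof takes essentially the same route as the paper's: invoke Corollary \ref{cor:next shortest path} to obtain the guardable $(u,v)$-path $\Pi_3$ separating $O_1$ from $O_2$, have $p_3$ guard it, and observe that whichever of $R[\Pi_1,\Pi_3]$, $R[\Pi_3,\Pi_2]$ the evader is confined to, the safe obstacle on the other side transitions to removed. Your closing argument that $\Pi_3$ remains guardable once the far pursuer is released is a point the paper leaves implicit and is a welcome addition; the only detail the paper records that you omit is that $\Pi_3$ must touch an obstacle in each of the two subregions (otherwise it would be shorter than $\Pi_1$ or $\Pi_2$, contradicting their minimality in $R[\Pi_1,\Pi_2]$), which is needed only for the bookkeeping that classifies the new evader territory's type.
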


\begin{proof}
Use Corollary \ref{cor:next shortest path} to find a guardable $(u,v)$-path $\Pi_3$ in $R[\Pi_1, \Pi_2]$ whose homotopy type is distinct from that of both $\Pi_1, \Pi_2$.
Pursuer $p_3$ establishes a guarding position on $\Pi_3$. The evader is now trapped in either $R[\Pi_1, \Pi_3]$ or $R[\Pi_3, \Pi_2]$, so one of $O_1,O_2$ transitions from safe to removed. Furthermore, $\Pi_3$ must touch at least one obstacle in each of $R[\Pi_1, \Pi_3]$ or $R[\Pi_3, \Pi_2]$. Otherwise, $\Pi_3$ would be shorter than one of $\Pi_1, \Pi_2$, which contradicts the minimality of that path in $R[\Pi_1, \Pi_2]$. 
Depending on the configuration of the obstacles, we may be able to restrict the evader territory further. After doing so, the evader territory may be of any possible type, as shown in Figure \ref{fig:type2-type3} (a).
\end{proof}

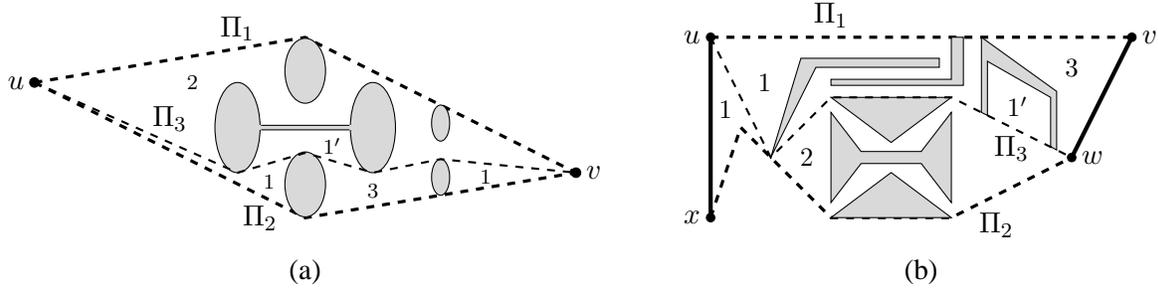
\begin{figure}

\begin{center}

\begin{tabular}{ccc}

\begin{tikzpicture}[scale=1.2]

\draw[dashed, very thick] (1,.5) -- (4,1) -- (7,-.5);
\draw[dashed, very thick] (1,.5) -- (4,-1) -- (7,-.5);

\draw[fill=gray!30] (4, .63) ellipse (.225 and .36);
\draw[fill=gray!30] (4, -.63) ellipse (.225 and .36);

\draw[fill=gray!30] (3.25, 0) ellipse (.25 and .5);
\draw[fill=gray!30] (4.75, 0) ellipse (.25 and .5);

\draw[fill=gray!30] (5.5, -.55 ) ellipse (.1 and .2);
\draw[fill=gray!30] (5.5, .05 ) ellipse (.1 and .2);


\draw[dashed, thick] (1,.5)  -- (3.25, -.5) -- (4,-.28) -- (4.75, -.5) -- (5.5,-.35) -- (7,-.5);

\fill[gray!30] (3.4,.025) -- (4.6,.025) -- (4.6,-.025) -- (3.4,-.025) -- cycle; 
\draw (3.51,.025) -- (4.49,.025);
\draw (3.51,-.025) -- (4.49,-.025);

\node at (2.75,.5) {\scriptsize $2$};
\node at (4.3,-.2) {\scriptsize $1'$};
\node at (4.75,-.7) {\scriptsize $3$};
\node at (3.6,-.6) {\scriptsize $1$};
\node at (6,-.55) {\scriptsize $1$};

\node[left] at (1,.5) {$u$};
\node[right] at (7,-.5) {$v$};

\draw[fill] (1,.5) circle (1.5pt);
\draw[fill] (7,-.5)  circle (1.5pt);

\node at (2.5,.1) {$\Pi_3$};
\node at (3.25,1.1) {$\Pi_1$};
\node at (3.5,-1) {$\Pi_2$};

\node at (4,-1.6) {(a)};

\end{tikzpicture}

& \qquad &

\begin{tikzpicture}[scale=.8]

\draw[ultra thick] (0,0) -- (0,3);
\draw[ultra thick] (6,1) -- (7,3);

\draw[dashed, very thick] (0,3) -- (7,3);
\draw[dashed, very thick] (0,0) -- (.5,1.5) -- (1,1) -- (2,0) -- (4,0) -- (6,1);

\draw[fill=gray!30]  (1,1) --  (1.75, 2.5) -- (3.8, 2.5)  -- (3.8, 2.65) -- (1.5, 2.65)  -- cycle;

\draw[fill=gray!30] (4, 3) -- (4, 2.3) -- (2, 2.3) -- (2, 2.2) -- (4.2, 2.2) -- (4.2, 3) -- cycle;

\draw[fill=gray!30] (2,0) -- (4,0) -- (3, .75) -- cycle;
\draw[fill=gray!30] (2,2) -- (4,2) -- (3, 1.25) -- cycle;

\draw[fill=gray!30] (4.5, 1.75) -- (4.5, 3) -- (5.75, 2.1) -- (5.75, 1.125) -- (5.65, 1.18)-- (5.65, 2) -- (4.6, 2.6) -- (4.6, 1.7) -- cycle;

\draw[fill=gray!30] (2,1.75) -- (2.5,1.1) -- (3.5, 1.1) -- (4, 1.75)  -- (4, .25) -- (3.5, .9) -- (2.5, .9) -- (2, .25) -- cycle;

\draw[dashed, thick] (0,3) -- (1,1) -- (2,2) -- (4,2) -- (6,1);

\node[left] at (0,3) {$u$};
\node[above] at (2,3) {$\Pi_1$};
\node[right] at (7,3) {$v$};

\node[left] at (0,0) {$x$};
\node[below] at (4.75,.25) {$\Pi_2$};
\node[right] at (6,1) {$w$};

\draw[fill] (0,3) circle (2pt);
\draw[fill] (7,3) circle (2pt);
\draw[fill] (0,0) circle (2pt);
\draw[fill] (6,1) circle (2pt);

\node at (5,1.15) {$\Pi_3$};

\node at (.25,1.80) {\small $1$};
\node at (1.6,1.0) {\small $2$};
\node at (0.9,2.25) {\small $1$};
\node at (6,2.5) {\small $3$};
\node at (5.1,1.85) {\small $1'$};

\node at (3.5,-.9) {(b)};

\end{tikzpicture}

\end{tabular}

\caption{Examples of  moves where the new guarded path $\Pi_3$ divides the region into five subregions, each identified by its type. (a) A type 2 move. (b) A type 3 move.}

\label{fig:type2-type3}

\end{center}

\end{figure}

This brings our discussion to a type 3 region, with $p_1$ guarding shortest $(u,v)$-path $\Pi_1$ and $p_2$ guarding shortest $(w,x)$-path $\Pi_2$.
Our primary goal is to trap the evader in a type 1 region, but we might end up transitioning an obstacle instead.   In the latter case, the new evader territory can be of any type, as explained below.

\begin{lemma}
\label{lemma:type3region}
Suppose that the evader is trapped in a type 3 region. Then the third pursuer can guard a path so that either (a) the evader is trapped in a type 1 region, or (b) an obstacle transitions to a lower threat level.
\end{lemma}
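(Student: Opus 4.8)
The plan is for $p_3$ to guard a shortest $(u,w)$-path $\Pi_3$ in $R$, thereby cutting the four-sided region along the diagonal joining the two endpoints that $\Pi_1$ and $\Pi_2$ do \emph{not} share. Since $p_1$ guards $\Pi_1$ and $p_2$ guards $\Pi_2$, the pursuer $p_3$ is free; a shortest $(u,w)$-path is minimal in $R$, so the Guarding Lemma \ref{lemma:guarding} lets $p_3$ attain a guarding position while $p_1,p_2$ keep the evader confined. Because the corners occur in clockwise order $u,v,w,x$, and after advancing endpoints so that $\Pi_3$ meets $\Pi_1,\Pi_2$ only at $u$ and $w$ respectively, the path $\Pi_3$ separates $R$ into two three-sided subregions: $A = R[\Pi_1,\Pi_3]$ bounded by $\Pi_1$, the boundary arc $\Delta_1$, and $\Pi_3$; and $B = R[\Pi_3,\Pi_2]$ bounded by $\Pi_3$, $\Pi_2$, and the boundary arc $\Delta_2$. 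The evader is trapped in one of $A,B$, and any obstacle lying wholly in the other region is marked removed.

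I would then split into two cases according to whether $\Pi_3$ touches an obstacle. If $\Pi_3$ touches no obstacle, then it is the unique shortest $(u,w)$-path, since otherwise Lemma \ref{lem: touching lemma} would force it to touch one. Moreover $\Pi_1$ remains a shortest $(u,v)$-path inside $A$ and $\Pi_3$ a shortest $(u,w)$-path inside $A$, because $A \subseteq R$ and both paths lie in $A$; the analogous statement holds in $B$. As $\Pi_3$ is obstacle-free, no obstacle can touch both of the two shortest paths bounding $A$ (nor both bounding $B$), so each of $A,B$ satisfies the definition of a type 1 region. Hence the evader is trapped in a type 1 region, establishing conclusion (a).

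In the remaining case $\Pi_3$ touches an obstacle $O$, and I claim $O$ transitions to a lower threat level, giving (b). If $O$ was dangerous, then once $p_3$ guards $\Pi_3$ the obstacle touches a guarded path, so it becomes safe if it remains in the evader territory and removed otherwise. If $O$ was already safe, touching $\Pi_1$ or $\Pi_2$, then $O$ now touches two guarded paths and separates the evader territory exactly as in the type 1 analysis (cf.\ Figure \ref{fig:type1move}(c)); the evader is confined to one side and $O$ becomes removed. In every sub-case, including the possibility that the evader is trapped in a pocket bounded in part by $\partial O$ as in Figure \ref{fig:type2-type3}(b), at least one obstacle drops in threat level, which is conclusion (b).

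The main obstacle to make rigorous is the topological claim that the diagonal path $\Pi_3$ partitions the four-sided region into precisely the two three-sided regions $A$ and $B$ of the asserted combinatorial type; this rests on the clockwise ordering of $u,v,w,x$ together with the endpoint-advancing convention guaranteeing $\Pi_3 \cap \Pi_1 = \{u\}$ and $\Pi_3 \cap \Pi_2 = \{w\}$, so that $\partial A$ and $\partial B$ are simple closed curves bounding simply connected pieces. The secondary point is the verification of the ``no obstacle touches both shortest paths'' clause of the type 1 definition, which in the clean case is immediate from $\Pi_3$ being obstacle-free, and in the touching case is exactly what is traded away in exchange for an obstacle transition.
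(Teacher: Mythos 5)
Your proposal is correct and follows essentially the same route as the paper: $p_3$ guards a shortest $(u,w)$-path cutting the quadrilateral into the two triangular pieces $R[\Pi_1,\Delta_1,\Pi_3]$ and $R[\Pi_3,\Pi_2,\Delta_2]$, with case (a) when $\Pi_3$ is obstacle-free and case (b) otherwise. Your treatment of the sub-case where $\Pi_3$ touches an already-safe obstacle (arguing it must become removed, as in the type 1 analysis) is in fact slightly more careful than the paper's one-line assertion that the obstacle ``transitions to either safe or removed,'' but it is the same argument.
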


\begin{proof}
 Let $\Pi_3$ be a $(u,w)$-shortest path. Pursuer $p_3$ moves to guard this path using Lemma \ref{lemma:guarding}. This traps the evader in a smaller region: without loss of generality, this region is bounded by $\Pi_1, \Delta_1, \Pi_3$. If $\Pi_3$ does not touch any obstacles in this region, then the evader is now in a type 1 region.  
If $\Pi_3$ touches an obstacle $O$, then this obstacle transitions to either safe or removed. 
The evader could be trapped in a region of any type, as shown in  Figure \ref{fig:type2-type3} (b). 
\end{proof}

We can now prove our main theorem: three pursuers can capture the evader in a two-dimensional compact domain with piecewise analytic boundary. 

\begin{proofof}{Theorem \ref{thm:3pursuers}}
The first round traps the evader in a type $1'$ region, or transitions an obstacle state. 
If we are in a region of type 1, $1'$ or 2 then we transition an obstacle state in the current round by Lemma \ref{lemma:type1region} and 
Lemma \ref{lemma:type2region}. When we are in a type 3 region, Lemma \ref{lemma:type3region} ensures that we either trap the evader in a type 1 region, or we transition an obstacle. With each path that we guard, the boundary of the updated evader territory is still piecewise analytic by Lemma \ref{lem: shortest path is nice}. At the end of the round, we update the evader territory and our value for minimum obstacle separation since our new guarded path might be closer to an obstacle than the current value $d_{\min}$. (Note that the maximum boundary curvature $\kappa_{\max}$ never increases since all additions to the boundary are line segments.) 

After at most $2k$ rounds, we have transitioned all $k$ obstacles to either safe or removed. Once all obstacles  have been transitioned, the evader is trapped in a simply connected type 0 region. Lemma \ref{lemma:guardable pair} shows that the evader will then be caught. Each round completes in finite time, so the three pursuers win the game. 
The capture time upper bound of $O( 2k \cdot \diam(\Domain) + \diam(\Domain)^2)$ follows easily. The time required to guard any shortest path is $\diam(\Domain)$ by Lemma \ref{lemma:guarding} and lion's strategy completes in time $\diam(\Domain)^2$ by Theorem \ref{thm:lion}.
\end{proofof}

 \section{Conclusion}

In this paper, we described a winning pursuer strategy for a single pursuer in a $\cat$ space when the capture criterion is $d(p,e)=0$. We then restricted our attention to compact domains in $\euc^2$ with piecewise analytic boundary. We showed that three pursuers are sufficient to catch an evader in such environments.  

There are  plenty of avenues for reseach in topological pursuit-evasion. Pursuit-evasion results on polyhedral surfaces are an active area of current research \cite{klein+suri+polyhedra, noori+isler+terrain,noori+isler+polyhedra}.
For example, Klein and Suri \cite{klein+suri+polyhedra} have proven that $4g+4$ pursuers have a winning strategy on a polyhedral surface of genus $g$. Meanwhile, Schr\"{o}der \cite{schroeder} has proven the that at most$\lfloor 3g/2 \rfloor + 2$ pursuers are needed for a graph of genus $g$ (meaning that such a graph can be drawn on a surface of genus $g$ without edge crossings). It would be natural to consider this question for topological surfaces. 
Likewise, there are a wealth of motion and sensory constraints to consider. Most of these variations of pursuit-evasion have a natural analog in a topological setting.

\section{Acknowledgments}

This work was completed while the first author was a long-term visitor at the Institute for Mathematics and its Applications. We 
are grateful to the IMA for its support and for fostering such a vibrant research community.


\bibliography{citation_pursuit_evasion}

\end{document}